\newtheorem{theorem}{Theorem}[section]
\newtheorem{corollary}{Corollary}
\newtheorem{lemma}[theorem]{Lemma}
\newtheorem{proposition}[theorem]{Proposition}
\theoremstyle{definition}
\newtheorem{definition}[theorem]{Definition}
\newtheorem{assumption}[theorem]{Assumption}
\newtheorem{example}{Example}
\newtheorem{remark}[theorem]{Remark}
\newtheorem*{theorem*}{Theorem}
\newtheorem*{proposition*}{Proposition}
\newtheorem*{definition*}{Definition}
\numberwithin{equation}{section}
\def\L{\mathrm{L}}
\def\LLL{\mathscr{L}}
\def\c{\mathrm{c}}
\def\W{\mathrm{W}}
\def\dom{\mathrm{D}}
\def\dd{\mathrm{d}}
\def\ee{\mathrm{e}}
\def\vv{\mathrm{v}}
\def\BC{\mathrm{C}_{\mathrm{b}}}
\def\RR{\mathbb{R}}
\def\CC{\mathbb{C}}
\def\NN{\mathbb{N}}
\def\BB{\mathbb{B}}
\def\QQ{\mathbb{Q}}
\def\Id{\mathrm{I}}
\def\semis{\mathscr{P}}
\def\diag{\mathrm{diag}}
\def\esssup{\mathop{\mathrm{ess}\,\sup}}
\def\tlim{\mathop{\tau-\mathrm{lim}}}
\newenvironment{abc}{\begin{enumerate}[{\rm (a)}]}{\end{enumerate}}
\newenvironment{iiv}{\begin{enumerate}[{\rm (i)}]}{\end{enumerate}}
\thanks{The authors are grateful to University of Wuppertal for the possibility of funding the stay of the first author at the University of Ljubljana within the Erasmus exchange program. {The first author was supported by the DAAD-TKA Project 308019 ``\emph{Coupled systems and innovative time integrators}''}. The second author acknowledges financial support from the Slovenian Research Agency, Grant No.~P1-0222. Both  authors are thankful to B\'{a}lint Farkas for encouraging the topic of this paper as well as for many beneficial discussions, and to Eszter Sikolya for her valuable comments.}
\subjclass{Primary: 35R02; Secondary: 35F46, 47D06, 46A70.}
 \keywords{transport equations, infinite metric graphs, bi-continuous operator semigroups, Bochner $\L^{\infty}$-spaces, perturbations.}
\begin{document}

\title{Bi-Continuous Semigroups for Flows on Infinite Networks} 
\author{Christian Budde}
\address{North-West University, School of Mathematical and Statistical Sciences, Private Bag X6001-209, Potchefstroom 2520, South Africa}
\email{christian.budde@nwu.ac.za}
\author{Marjeta Kramar Fijav\v{z}}
\address{University of Ljubljana, Faculty of Civil and Geodetic Engineering, Jamova 2, SI-1000 Ljubljana, Slovenia / Institute of Mathematics, Physics, and Mechanics, Jadranska 19, SI-1000 Ljubljana, Slovenia}
\email{marjeta.kramar@fgg.uni-lj.si}

\begin{abstract}
We study transport processes on infinite metric graphs with non-constant velocities and matrix boundary conditions in the $\L^{\infty}$-setting. We apply the theory of bi-continuous operator semigroups to obtain well-posedness of the problem under different assumptions on the velocities and for general stochastic matrices appearing in the boundary conditions.
\end{abstract}


\maketitle

\section{Introduction}

Consider a huge network of {possibly} unknown size {but known structure.
To model this situation, we may take an infinite graph and equip it with the appropriate combinatorial assumptions.}
Along the edges of the network some transport processes take place that are coupled in the vertices in which the edges meet. This means that we 
consider each edge as an interval and describe functions on it, that is, we consider a \emph{metric graph}. {Such} systems of partial differential equations on a metric graph are also known as \emph{quantum graphs}. The transport processes (or \emph{flows}) on the edges are given by  partial differential equations of the form
$\frac{\partial}{\partial t}u_j(t,x)= c_j \frac{\partial}{\partial x}u_j(t,x)$
 and are interlinked in the common nodes via some prescribed transmission conditions. 

Such a problem was considered by Dorn et al.~\cite{Dorn2008,DKNR2010,DKS2009} on the state space $\L^1\left(\left[0,1\right],\ell^1\right)$  applying the theory of strongly continuous operator semigroups. 
A semigroup approach to flows in finite metric graphs was first presented by Kramar and Sikolya \cite{KS2005} and further used in \cite{MS2007,Vertex2008,DKS2009,BDK2013,BanFal2015,Positive2017}  while
transport processes in infinite networks were also studied in  \cite{BanFal2017,BanPuc2018}.
All these results were obtained in the $\L^1$-setting. {M\'atrai and Sikolya \cite{MS2007} went further and  considered the space of bounded Borel measures. When studying} problems in infinite graphs, the flow problem in the $\L^{\infty}$-setting should be interesting for applications as well. Von Below and Lubary \cite{vBeLu2005,vBeLu2009}, for example, study eigenvalues of the Laplacian on infinite networks in an $\L^{\infty}$-setting. {Also the celebrated existence, uniqueness, and stability results for linear transport equations by DiPerna and Lions  \cite{DiPLio1989}  are proven in the $\L^p$-setting for all $1\le p \le\infty$.}
To the best of our knowledge, transport equations on infinite metric graphs with an $\L^{\infty}$-state space have not yet been studied. 
{By Lotz' Theorem \cite[Thm.~3]{Lotz1985}, every strongly continuous semigroup on an $\L^{\infty}$-space is automatically uniformly continuous which means that semigoups on such spaces, which are not strongly continuous, are of special interest and a different approach is needed.}
We consider the transport problem on the state space $\L^{\infty}\left(\left[0,1\right],\ell^1\right)$ where the obtained operator semigroup is  not strongly continuous.
To tackle this we apply the theory of \emph{bi-continuous semigroups} that  was introduced by K\"uhnemund  \cite{KuPHD,Ku} and further developed by Farkas \cite{FaPHD,FaStud,FaSF} and Albanese-Lorenzi-Manco \cite{ALM}.

 This paper is organised as follows. Section 2 is a preliminary section which we start by a short introduction to the theory of bi-continuous semigroups. We also recall certain duality concepts of Bochner $\L^p$-spaces that are needed later to obtain the generation theorem, and introduce some notions for networks and metric graphs.
In Section 3 we present our flow problem for an infinite metric graph. We first prove the well posedness in the case when all flow velocities $c_j$ equal 1. Next, we generalise this result to the case with rationally dependent velocities satisfying a finiteness condition. Finally, we show that the general problem on a finite metric graph is well-posed.

\section{Preliminaries}

\subsection{Bi-Continuous Semigroups}
A family of linear bounded operators  $(T(t))_{t\geq0}$ on a Banach space $X$ is called a (one-parameter) \emph{strongly continuous semigroup} if it satisfies the following properties,
\begin{enumerate}
\item $T(0)=I$, $T(t+s)=T(t)T(s)$ for all $t,s\geq 0$, and
\item  the function $t\mapsto T(t)f$ is continuous for all $f\in X$. 
\end{enumerate}
Strongly continuous semigroups and their applications to evolution equations have been studied intensively in the last decades, we refer to monographs {\cite{EN,NagelPos1986,Positive2017} and references therein.
It is well-known, however, that there are important examples of semigroups which fail to satisfy property (2) above, i.e., that are not strongly continuous with respect to the Banach space norm. {A} standard example is the left translation semigroup $(T_{l}(t))_{t\geq0}$ on $\BC(\RR)$ defined by 
$T_{l}(t)f(x)=f(x+t),$ $ t\geq0. $
It is also known that this semigroup is strongly continuous with respect to the so-called compact open topology $\tau_{co}$. This is a locally convex topology induced by the family of seminorms $\semis=\{p_K\mid  K\subseteq\RR\ \text{compact}\}$ where
\[
p_K(f)=\sup_{x\in K}{\left|f(x)\right|},\quad f\in\BC(\RR).
\] 
Hence, it might be useful to equip the given Banach space $\left(X,\left\|\cdot\right\|\right)$ with an additional locally convex topology $\tau$. This is the general idea of the so-called bi-continuous semigroups. 
Before giving the proper definition we state the main assumptions on the interplay between the norm and the locally convex topology $\tau$. 

\begin{assumption}\label{ass:bicontassump}
\begin{iiv}
	\item $\tau$ is a Hausdorff topology and is coarser then the norm-topology on $X$, i.e., the identity map $(X,\left\|\cdot\right\|)\rightarrow(X,\tau)$ is continuous.
	\item $\tau$ is sequentially complete on norm-bounded sets, i.e., every $\left\|\cdot\right\|$-bounded $\tau$-Cauchy sequence in $\tau$-convergent.
	\item The dual space of $(X,\tau)$ is norming for $X$, i.e.,
	\[
	\left\|f\right\|=\sup_{\substack{{\varphi\in(X,\tau)',  \left\|\varphi\right\|\leq1}}}{\left|\varphi(f)\right|}.
	\]
\end{iiv}
\end{assumption}

\begin{remark}
\begin{abc}
	\item One can re-formulate the third assumption by the following equivalent statement: There is a set $\semis$ of $\tau$-continuous seminorms defining the topology $\tau$, such that
\begin{equation*}\label{eq:semisnorm}
\|f\|=\sup_{p\in\semis}p(f).
\end{equation*}
	\item The above mentioned compact-open topology $\tau_{co}$  on $\BC(\RR)$ satisfies all these assumptions.
\end{abc}
\end{remark}

Now we are in the state to formulate the definition of a bi-continuous semigroup.

\begin{definition}[K\"uhnemund \cite{Ku}]\label{def:bicontdef}
Let $X$ be a Banach space with norm $\|\cdot\|$ together with a locally convex topology $\tau$, such that the conditions in Assumption \ref{ass:bicontassump} are satisfied. We call {a family of linear bounded operators} $(T(t))_{t\geq0}$ a \emph{bi-continuous semigroup} on $X$ if the following holds.
\begin{iiv}
\item $(T(t))_{t\geq0}$ satisfies the \emph{semigroup property}, i.e., $T(t+s)=T(t)T(s)$ and $T(0)=I$ for all $s,t\geq 0$.
\item $(T(t))_{t\geq0}$ is \emph{strongly $\tau$-continuous}, i.e., the map $\varphi_f:[0,\infty)\to(X,\tau)$ defined by $\varphi_f(t)=T(t)f$ is continuous for every $f\in X$.
\item $(T(t))_{t\geq0}$ is \emph{exponentially bounded}, i.e., there exist $M\geq 1$ and $\omega\in\RR$ such that $\left\|T(t)\right\|\leq M\ee^{\omega t}$ for each $t\geq0$.
\item $(T(t))_{t\geq0}$ is \emph{locally-bi-equicontinuous}, i.e., if $(f_n)_{n\in\NN}$ is a norm-bounded sequence in $X$ which is $\tau$-convergent to $0$, then also $(T(s)f_n)_{n\in\NN}$ is $\tau$-convergent to $0$ uniformly for $s\in[0,t_0]$ for each fixed $t_0\geq0$.
\end{iiv}
\end{definition}

The \emph{growth bound} of $(T(t))_{t\geq0}$ is defined as 
\[\omega_0(T):=\inf\{\omega\in \RR \mid \text{ there exists } M\ge 1 \text{ such that }\left\|T(t)\right\|\leq M\ee^{\omega t} \text{ for all } t\geq0\}.\]

As in the case of strongly continuous semigroups one can define a generator and relate it to the well-posedness property of abstract initial value problems. 
The \emph{generator} $A$ of {a} bi-continuous semigroup $(T(t))_{t\geq0}$ is defined as
\[Af:=\tlim_{t\to0}{\frac{T(t)f-f}{t}}\] with the domain 
\[\dom(A):=\left\{f\in X \mathrel{\bigg|}  \tlim_{t\to0}{\frac{T(t)f-f}{t}}\ \text{exists and} \ \sup_{t\in(0,1]}{\frac{\|T(t)f-f\|}{t}}<\infty\right\}.\]

Let us recall some more notions from the bi-continuous setting. A subset $M\subseteq X$ is called \emph{bi-dense} if for every $f\in X$ there exists a $\left\|\cdot\right\|$-bounded sequence $(f_n)_{n\in\NN}$ in $M$ which is $\tau$-convergent to $f$. An operator $A$ is called \emph{bi-closed}, whenever for $f_n\stackrel{\tau}{\to}f$ and $Af_n\stackrel{\tau}{\to}g$, where both sequences are norm-bounded, it holds $f\in\dom(A)$ and $Af=g$.

We have collected here some basic properties of generators of  bi-continuous semigroups. For the proofs we refer to \cite{KuPHD,Ku,FaStud}.

\begin{proposition}\label{thm:bicontprop}
The following assertions hold for the generator $(A,D(A))$ of a bi-continuous semigroup $(T(t))_{t\geq0}$. 
\begin{abc}
\item {The} operator $A$ is {bi-closed} and its domain $\dom(A)$ is {bi-dense} in $X$. 
\item For $f\in\dom(A)$ one has $T(t)f\in\dom(A)$ and $T(t)Af=AT(t)f$ for all $t\geq0$.
\item For $t>0$ and $f\in X$ one has 
\[ \int_0^t{T(s)f\ \dd s}\in\dom(A)\ \ \text{and}\ \ A\int_0^t{T(s)f\ \dd s}=T(t)f-f. \]
\item For $\lambda>\omega_0(T)$ one has $\lambda\in\rho(A)$ and  for every $f\in X$ \
\begin{align}\label{eq:bicontlaplace}
R(\lambda,A)f=\int_0^{\infty}{\ee^{-\lambda s}T(s)f\ \dd s},\end{align}  where the integral is a $\tau$-improper integral.
\item The semigroup $(T(t))_{t\geq0}$ is uniquely determined by its generator $(A,D(A))$.
\end{abc}
\end{proposition}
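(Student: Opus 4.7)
The plan is to follow the classical arguments for strongly continuous semigroups (see \cite{EN}), replacing norm-continuity by $\tau$-continuity and Bochner integrals by $\tau$-improper integrals, while tracking norm-boundedness at every step: local-bi-equicontinuity from Definition~\ref{def:bicontdef} applies only to norm-bounded sequences, and the norming property from Assumption~\ref{ass:bicontassump} is what turns $\tau$-estimates into $\|\cdot\|$-estimates.

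I would start with (b) and (c), which are essentially formal. For (b), fix $f\in\dom(A)$ and write
\[\frac{T(h)T(t)f-T(t)f}{h}=T(t)\,\frac{T(h)f-f}{h}.\]
The right factor is norm-bounded and $\tau$-converges to $Af$ by definition of $\dom(A)$; applying $T(t)$ preserves both properties, so the limit exists in $\tau$ and equals $T(t)Af$, while the norm bound on the difference quotient is inherited via $\|T(t)\|\leq M\ee^{\omega t}$. For (c), the telescoping identity
\[\frac{T(h)-I}{h}\int_0^t T(s)f\,\dd s=\frac{1}{h}\int_t^{t+h} T(s)f\,\dd s-\frac{1}{h}\int_0^h T(s)f\,\dd s\]
reduces everything to $\tau$-continuity of $s\mapsto T(s)f$ and the exponential bound.

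Bi-density in (a) is then immediate from (c): $\tfrac{1}{h}\int_0^h T(s)f\,\dd s$ lies in $\dom(A)$, stays norm-bounded uniformly in small $h>0$, and $\tau$-converges to $f$ as $h\downarrow 0$. For bi-closedness I would take norm-bounded sequences with $f_n\stackrel{\tau}{\to}f$ and $Af_n\stackrel{\tau}{\to}g$, and start from
\[T(s)f_n-f_n=\int_0^s T(r)Af_n\,\dd r,\]
obtained by combining (b) and (c) on each $f_n\in\dom(A)$. Passing the $\tau$-limit under the integral via local-bi-equicontinuity, then dividing by $s$ and letting $s\downarrow 0$ yields $f\in\dom(A)$ and $Af=g$. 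Part (d) follows by showing that the $\tau$-improper integral in \eqref{eq:bicontlaplace} converges for $\lambda>\omega_0(T)$, defines a bounded operator with norm $\leq M/(\lambda-\omega)$ (via the exponential bound together with the norming property), and satisfies $R(\lambda)(\lambda-A)=I$ on $\dom(A)$ and $(\lambda-A)R(\lambda)=I$ on $X$ from direct semigroup-law manipulations. Finally, (e) reduces to (d): two such semigroups share the resolvent on a common right half-plane, and testing against functionals in $(X,\tau)'$ (which is norming) reduces the identity $T(t)=S(t)$ to injectivity of the scalar Laplace transform.

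The main obstacle I expect is the bi-closedness step of (a): commuting the $\tau$-limit with $\int_0^s T(r)Af_n\,\dd r$ demands local-bi-equicontinuity uniformly in $r\in[0,s]$ together with a uniform norm bound on the integrands $\{T(r)Af_n\}_{r,n}$; both are available but must be assembled carefully. A secondary subtlety is making rigorous sense of the $\tau$-improper integral in (d) and showing it yields a $\|\cdot\|$-bounded operator, which again rests on the interplay between exponential boundedness and the norming property.
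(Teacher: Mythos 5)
The paper does not prove Proposition \ref{thm:bicontprop} itself; it explicitly defers to \cite{KuPHD,Ku,FaStud}. Your outline correctly reconstructs the standard arguments given in those references -- the classical strongly continuous semigroup proofs adapted with $\tau$-Riemann and $\tau$-improper integrals, local bi-equicontinuity to pass $\tau$-limits through $T(t)$ and under integrals, and the norming property of $(X,\tau)'$ to control norms of the integrals -- so it is sound and takes essentially the same route as the cited proofs.
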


The following generalisation of the classical notion of well-posedness is due to Farkas \cite[Def.~4.1.1]{FaPHD}. By ${\mathrm{B_{loc}}}(\RR_+,X)$ we denote the space of functions that are bounded on each compact subset of $\RR_+$ and the differentiation is understood in the vector valued sense with respect to $\tau$.
\begin{definition}
The abstract Cauchy problem
 \begin{align}\label{eqn:ACP}\tag{ACP}
\begin{cases}
\dot{u}(t)=Au(t),&\quad t\geq0,\\
u(0)=f \in \dom(A),
\end{cases}
\end{align}
is \emph{well-posed} in $X$ if
\begin{iiv}
\item for every $f\in\dom(A)$ there exists a solution $u(t):=u(t,f)$ of \eqref{eqn:ACP} with $u\in {\mathrm{B_{loc}}}(\RR_+,X)\cap\mathrm{C}^1(\RR_+,(X,\tau))$ and $\dot{u}\in {\mathrm{B_{loc}}}(\RR_+,X)$, 
\item the solution is unique, and
\item the solution $u$ of \eqref{eqn:ACP} depends continuously on the initial data $f$, i.e., if the sequence $(f_n)_{n\in\NN}$ is norm-bounded and $\tau$-convergent to $0$ then the solutions $u_n(t):=u_n(t,f_n)$ converge to $0$ in $\tau$ and uniformly on compact intervals $\left[0,t_0\right]$.
\end{iiv}
\end{definition}

\begin{theorem}{\cite[Thm.~4.1.2]{FaPHD}}\label{thm:wp}
If $(A,\dom(A))$ generates a bi-continuous semigroup then the abstract Cauchy problem \eqref{eqn:ACP} is well-posed.\end{theorem}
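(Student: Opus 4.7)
The natural candidate for a solution is $u(t) := T(t)f$ for $f \in \dom(A)$, and the plan is to verify each of the three conditions (i)--(iii) in the definition of well-posedness.

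\textbf{Existence.} First I would check $u \in \mathrm{B}_{\mathrm{loc}}(\RR_+,X)$, which is immediate from exponential boundedness: $\|T(t)f\| \le Me^{\omega t}\|f\|$. The key step is verifying that $t \mapsto T(t)f$ is $\mathrm{C}^1$ into $(X,\tau)$ with derivative $T(t)Af$. For the right derivative at $t\ge 0$ write
\[
\frac{T(t+h)f-T(t)f}{h}=T(t)\frac{T(h)f-f}{h}.
\]
Since $f\in\dom(A)$, the difference quotients $\frac{T(h)f-f}{h}$ are norm-bounded and $\tau$-converge to $Af$ as $h\to 0^+$. Applying local bi-equicontinuity of $(T(s))_{s\in[0,t]}$ to the sequence $\frac{T(h)f-f}{h}-Af$ shows that $T(t)$ of this sequence $\tau$-vanishes, so the right derivative equals $T(t)Af$. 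For the left derivative at $t>0$ I would split
\[
\frac{T(t)f-T(t-h)f}{h}-T(t)Af \;=\; T(t-h)\!\left(\tfrac{T(h)f-f}{h}-Af\right)+\bigl(T(t-h)-T(t)\bigr)Af,
\]
and handle the first summand via local bi-equicontinuity on $[0,t]$ and the second via strong $\tau$-continuity of the semigroup. Then $\dot{u}(t)=T(t)Af=AT(t)f$ by Proposition~\ref{thm:bicontprop}(b), and $\|\dot{u}(t)\|\le Me^{\omega t}\|Af\|$ gives $\dot{u}\in\mathrm{B}_{\mathrm{loc}}(\RR_+,X)$. Continuity of $\dot{u}$ into $(X,\tau)$ is just strong $\tau$-continuity applied to $Af$.

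\textbf{Uniqueness.} Given any solution $v$ with $v(0)=f$, I would fix $t>0$ and define $w:[0,t]\to X$ by $w(s)=T(t-s)v(s)$. The expected computation gives, in the $\tau$-sense,
\[
\dot{w}(s) \;=\; -AT(t-s)v(s)+T(t-s)\dot{v}(s) \;=\; -T(t-s)Av(s)+T(t-s)Av(s)=0,
\]
using Proposition~\ref{thm:bicontprop}(b) and that $v(s)\in\dom(A)$. Justifying the product rule requires a little care: one evaluates $\varphi\circ w$ for $\varphi\in (X,\tau)'$, uses that $T(\cdot)$ is strongly $\tau$-continuous on norm-bounded sets, and appeals to local bi-equicontinuity to pass to the limit in $\frac{T(t-s-h)-T(t-s)}{h}v(s)$. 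Constancy of $w$ (tested against a norming family of $\tau$-continuous functionals, which exist by Assumption~\ref{ass:bicontassump}(iii)) yields $v(t)=w(0)=T(t)f=u(t)$.

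\textbf{Continuous dependence.} If $(f_n)\subset\dom(A)$ is norm-bounded and $\tau$-convergent to $0$, then the corresponding solutions $u_n(t)=T(t)f_n$ satisfy $u_n(t)\xrightarrow{\tau}0$ uniformly on every compact interval $[0,t_0]$ directly by local bi-equicontinuity of $(T(t))_{t\geq 0}$.

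The main obstacle is the $\tau$-differentiability argument, in particular the two-sided derivative: unlike the strongly continuous case, one cannot simply use norm-continuity of $T(\cdot)$, and the interplay between the norm-bound built into the definition of $\dom(A)$ and local bi-equicontinuity must be used to pass the $\tau$-limit inside $T(t-h)$. The uniqueness step has the same flavour and requires the analogous care when differentiating the product $T(t-s)v(s)$.
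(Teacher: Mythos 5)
The paper gives no proof of this theorem---it is cited verbatim from Farkas's thesis \cite[Thm.~4.1.2]{FaPHD}---and your argument is correct and is essentially the standard one given there: the orbit $u(t)=T(t)f$ is shown to be a $\tau$-differentiable solution via the difference-quotient decomposition combined with the norm-bound built into $\dom(A)$ and local bi-equicontinuity, uniqueness follows by differentiating $s\mapsto T(t-s)v(s)$ and testing against the norming dual of $(X,\tau)$, and continuous dependence is immediate from local bi-equicontinuity. I see no gaps; the only points deserving the care you already flag are the norm-boundedness of the difference quotients of $v$ (obtained from $\dot v\in\mathrm{B_{loc}}(\RR_+,X)$ and the norming property) and the passage from sequences to limits as $h\to 0$, which is harmless since $[0,\infty)$ is first countable.
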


This result is very useful for applications in combination with some version of the generation theorem for bi-continuous semigroups. We only recall here a variant of Trotter--Kato approximation theorem, see \cite{KuPHD,AM2004,FaPHD}. For that we also evoke the notion of uniformly bi-continuous semigroups \cite[Def.~ 2.1]{KuPHD}.

\begin{definition}\label{def:UniBiCont}
Let $(T_n(t))_{t\geq0}$, $n\in\NN$, be bi-continuous semigroups on $X$. They are called \emph{uniformly bi-continuous} (of type $\omega$) if the following conditions hold.
\begin{iiv}
	\item There exist $M\geq1$ and $\omega\in\RR$ such that $\left\|T_n(t)\right\|\leq M\ee^{\omega t}$ for all $t\geq0$ and $n\in\NN$.
	\item $(T_n(t))_{t\geq0}$ are locally bi-equicontinuous uniformly for $n\in\NN$, i.e., for every $t_0\geq0$ and for every $\left\|\cdot\right\|$-bounded sequence $(f_k)_{k\in\NN}$ in $X$ which is $\tau$-convergent to $0$ we have
	\[
	\tlim_{k\to\infty}{T_n(t)f_k}=0,
	\]
	uniformly for $0\leq t\leq t_0$ and $n\in\NN$.
\end{iiv}
\end{definition}

Let us also recall the notion of a bi-core, as defined in \cite[Def.~1.20]{KuPHD}.
A subspace $D$ of the domain of a linear operator $(A,\dom(A))$ on a Banach space $X$ is a \emph{bi-core} for $A$ if for all $f\in\dom(A)$ there exists a sequence $(f_n)_{n\in\NN}$ in $D$ such that $(f_n)_{n\in\NN}$ and $(Af_n)_{n\in\NN}$ are $\left\|\cdot\right\|$-bounded and $\lim_{n\to\infty}{f_n}=f$ with respect to the locally convex (graph) topology $\tau_A$ generated by the family of seminorms
\[
\semis_A:=\left\{p(\cdot)+q(A\cdot) \mid p,q\in\semis\right\}.
\]

Having these definitions in mind, we can {state} the Trotter--Kato approximation theorem for bi-continuous semigroups which was first proven by K\"uhnemund \cite[Thm.~2.6]{KuPHD} and later on in a more general version by Albanese and Mangino \cite[Thm.~3.6]{AM2004}. For locally equicontinuous semigroups on locally convex space this was done by Albanese and K\"uhnemund \cite[Thm.~16]{AK2002}.

\begin{theorem}{\cite[Thm.~2.6]{KuPHD}}\label{thm:TK}
Let $(T_n(t))_{t\geq0}$,  $n\in\NN$,  be  bi-continuous semigroups with generators $(A_n,\dom(A_n))$ such that they are uniformly bi-continuous of type $\omega$ and let $\lambda_0>\omega$. Consider the following assertions.
\begin{abc}
	\item There exists a bi-densely defined operator $(A,\dom(A))$ such that $A_nf\stackrel{\left\|\cdot\right\|}{\rightarrow} Af$ for all $f$ in a bi-core of $A$ and such that $\mathrm{Ran}(\lambda_0-A)$ is bi-dense in $X$. 
	\item There exists an operator $R\in\LLL(X)$ such that $R(\lambda_0,A_n)f\stackrel{\left\|\cdot\right\|}{\rightarrow}Rf$ for all $f$ in a subset of $\mathrm{Ran}(R)$ which is bi-dense in $X$.
	\item There exists a bi-continuous semigroup $(T(t))_{t\geq0}$ with generator $(B,\dom(B))$ such that $T_n(t)f\stackrel{\tau}{\to}T(t)f$ for all $f\in X$ uniformly for $t$ in compact intervals
\end{abc}
Then the implications $\mathrm{(a)}\Rightarrow\mathrm{(b)}\Rightarrow\mathrm{(c)}$ hold. In this case, {the bi-closure $\overline{A}^{\tau}$ of $A$ equals $B$}.
\end{theorem}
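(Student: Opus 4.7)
The plan is to adapt the classical Trotter--Kato strategy to the bi-continuous setting, replacing strong norm-continuity of semigroups by strong $\tau$-continuity together with the uniform local bi-equicontinuity from Definition~\ref{def:UniBiCont}. Throughout, Proposition~\ref{thm:bicontprop}(d) and the uniform exponential bound in Definition~\ref{def:UniBiCont}(i) give $\|R(\lambda_0,A_n)\|\leq M/(\lambda_0-\omega)$, which is the common engine.

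For the implication (a)~$\Rightarrow$~(b), for $g$ in the given bi-core $D$ of $A$ the identity
\[
R(\lambda_0,A_n)(\lambda_0 - A)g = g + R(\lambda_0,A_n)(A_n - A)g
\]
combined with the norm convergence $A_n g \to Ag$ from (a) and the uniform resolvent bound yields $R(\lambda_0,A_n)f \to g$ in norm for every $f=(\lambda_0-A)g \in E:=(\lambda_0-A)D$. A diagonal argument using the bi-core property of $D$ and the assumed bi-density of $\mathrm{Ran}(\lambda_0-A)$ shows that $E$ is bi-dense in $X$. The limit defines a linear operator on $E$ of operator norm at most $M/(\lambda_0-\omega)$, which, using the equiboundedness of $R(\lambda_0,A_n)$ together with sequential completeness of $\tau$ on norm-bounded sets (Assumption~\ref{ass:bicontassump}(ii)), extends to some $R\in\LLL(X)$ with $E\subseteq\mathrm{Ran}(R)$, as required.

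The central implication is (b)~$\Rightarrow$~(c). Let $D_0$ be the bi-dense subset of $\mathrm{Ran}(R)$ on which $R_n h := R(\lambda_0,A_n)h \to Rh$ in norm. For $h\in D_0$, I would split
\[
T_n(t)R_n h - T_m(t)R_m h = T_n(t)(R_n-R_m)h + \bigl[T_n(t)-T_m(t)\bigr]R_m h,
\]
controlling the first term directly via the uniform exponential bound and (b), and handling the second by a Duhamel-type identity together with the Yosida rewrite $A_k R_k = \lambda_0 R_k - \Id$, which reduces everything to the norm convergence of the resolvents. This shows that $(T_n(t)Rh)_n$ is $\tau$-Cauchy uniformly for $t$ in compact intervals, and Assumption~\ref{ass:bicontassump}(ii) produces the $\tau$-limit $T(t)Rh$. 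Uniform local bi-equicontinuity (Definition~\ref{def:UniBiCont}(ii)) then propagates the convergence from $D_0$ to all $f\in X$, uniformly on compact time intervals. The semigroup law, exponential bound, strong $\tau$-continuity and local bi-equicontinuity of the limit pass from the approximants by standard arguments using the uniform version of (ii). To identify $B=\overline{A}^{\tau}$, taking the $\tau$-limit in $A_n R_n f = \lambda_0 R_n f - f$ on $\mathrm{Ran}(R)$ shows $R = R(\lambda_0,B)$ and that $B$ extends $A$ bi-closedly; the bi-closedness of generators from Proposition~\ref{thm:bicontprop}(a) then forces $B = \overline{A}^{\tau}$.

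The principal obstacle is the absence of norm-density: every convergence and extension step must be carried out along norm-bounded $\tau$-convergent sequences, so one cannot simply transplant the classical $C_0$-proof. Uniform local bi-equicontinuity is the decisive tool, allowing one to transport the finite information supplied by (b)---norm convergence of resolvents on a bi-dense subset---into $\tau$-convergence of the semigroups on all of $X$, uniformly for $t$ in compact intervals.
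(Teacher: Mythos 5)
The paper does not actually prove this theorem: it is imported verbatim from K\"uhnemund's thesis \cite[Thm.~2.6]{KuPHD}, and the only trace of the argument inside the paper is Remark \ref{rem:TK}, which records that the proof turns the operator $R$ of assertion (b) into a \emph{pseudo-resolvent} and defines $(B,\dom(B))$ from it. Measured against that strategy, your (a)$\Rightarrow$(b) is on the right track, but the extension step is underpowered. To extend the map $f=(\lambda_0-A)g\mapsto g$ from the bi-dense set $E=(\lambda_0-A)D$ to an operator $R\in\LLL(X)$, you approximate $f\in X$ by a norm-bounded $\tau$-convergent sequence $(f_k)_{k\in\NN}$ in $E$ and need $\tau\text{-}\lim_k Rf_k$ to exist and be independent of the chosen sequence. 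Equiboundedness of the $R(\lambda_0,A_n)$ together with sequential completeness on norm-bounded sets does not give this: a norm-bounded $\tau$-null sequence could a priori have $Rf_k\not\to 0$. What is needed is that the resolvent family $\{R(\lambda_0,A_n)\}_{n\in\NN}$ is itself uniformly locally bi-equicontinuous, which must be extracted from Definition \ref{def:UniBiCont}(ii) via the Laplace-transform representation \eqref{eq:bicontlaplace}; you never invoke this.

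The serious gap is in (b)$\Rightarrow$(c). Your decomposition isolates the term $\bigl[T_n(t)-T_m(t)\bigr]R_mh$, and the proposed Duhamel identity would differentiate $s\mapsto T_m(t-s)T_n(s)R_mh$, which requires $T_n(s)R_mh\in\dom(A_m)\cap\dom(A_n)$. But $R_mh=R(\lambda_0,A_m)h$ lies in $\dom(A_m)$ and in general not in $\dom(A_n)$ --- and in the very application of this theorem in Theorem \ref{thm:BiContNet-gen-fin} the domains genuinely vary with $n$, since the boundary condition $f(1)=\BB^{C_n}f(0)$ depends on $n$. The Yosida rewrite $A_kR_k=\lambda_0R_k-\Id$ does not repair this, because $R_n$ and $R_m$ are resolvents of different operators and need not commute, so the commutator terms that appear are not controlled by resolvent convergence alone. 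The established route (classical and bi-continuous alike) avoids the direct Cauchy argument: one first extends the convergence $R(\lambda_0,A_n)\to R$ to all $\lambda$ in a half-plane via the resolvent identity, checks that the limit is a pseudo-resolvent whose kernel is trivial and whose range is bi-dense (this is where the hypothesis on $\mathrm{Ran}(R)$ enters), obtains a bi-densely defined operator $B$ with $R(\lambda)=R(\lambda,B)$ satisfying Hille--Yosida-type estimates, generates $(T(t))_{t\geq0}$ from $B$, and only then proves $T_n(t)f\stackrel{\tau}{\to}T(t)f$ with the limit semigroup already in hand. As written, your construction of $T(t)$ as a $\tau$-limit of a Cauchy sequence cannot be completed.
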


\begin{remark}\label{rem:TK}
In the proof of \cite[Thm.~2.6]{KuPHD} one observes
that operator $R$ in assertion (b) gives rise to a pseudo-resolvent that is used to define operator $(B,\dom(B))$ in assertion (c).
\end{remark}

We refer to \cite{KuPHD,Ku,FaPHD,BF,FaStud,AM2004} for further properties of bi-continuous semigroups and their generators.

\subsection{{Bochner $\L^p$-spaces, duality, and translation semigroups}}\label{sec:Lp-dual}

We consider spaces of the form $\L^p\left([0,1],X\right)$ where $X$ is a Banach space. From \cite[Chapter IV,Section 1]{DiestelUhl1977} we obtain that for $1<p<\infty$ and $\frac{1}{p} + \frac{1}{q} =1$ one has
\begin{align}\label{eqn:DualL1}
\L^p\left(\left[0,1\right],X\right)'\cong\L^q\left(\left[0,1\right],X'\right) \quad\text{ and }\quad
\L^1\left(\left[0,1\right],X\right)'\cong\L^{\infty}\left(\left[0,1\right],X'\right)
\end{align}
whenever $X'$ has the {Radon--Nikodym property}. 
If this is not the case, one only has an isometric inclusion $\L^q([0,1],X')\hookrightarrow\L^p([[0,1],X)'$ for $1\leq p<\infty$.
{Recall, that the pairing is defined by
\begin{equation}\label{eqn:weak*top} 
\left\langle f,g\right\rangle:=\int_0^1{\left\langle f(s),g(s)\right\rangle_X \dd{s}},\quad f\in 
\L^q\left(\left[0,1\right],X'\right),\ g\in\L^p\left(\left[0,1\right],X\right),
\end{equation}
and $\left\langle \cdot,\cdot\right\rangle_X$ denotes the dual pairing between $X$ and $X'$.}

\begin{example}\label{ex:Lp-dual} It is known that the space $\ell^1$ has the Radon-Nikodym property while the spaces $\c_0$, $\c$, and $\ell^{\infty}$ do not.
Recall that $(\ell^1)' = \ell^{\infty}$ while 
$(\c_0)'\cong\ell^1$ as well as $(\c)'\cong\ell^1$ (that is, {the} space $\ell^1$ does not have a unique predual space).
By \eqref{eqn:DualL1} we obtain that 
\[
\L^{\infty}\left(\left[0,1\right],\ell^1\right)\cong \L^1\left(\left[0,1\right],\c_0\right)' \cong \L^1\left(\left[0,1\right],\c\right)'.
\]
{{while  $\L^{\infty}\left(\left[0,1\right],\ell^{\infty}\right)$  is only isomorphic to a subspace of $\L^1\left(\left[0,1\right],\ell^1\right)'$.}}
\end{example}

{We would like to study} the left translation semigroup $(T_l(t))_{t\geq0}$ on the space {$X=\L^{\infty}\left(\left[0,1\right],\ell^1\right)$}, 
{which is} not strongly continuous. By using duality arguments we will show that it is, however, a bi-continuous semigroup on $X$. 
{We start by showing } that the right translation semigroup on $\L^{1}\left(\left[0,1\right],\c_0\right)$ is {strongly continuous.}

\begin{lemma}\label{prop:TransL1}
The right translation semigroup $(T_r(t))_{t\geq0}$, defined by 
\[
T_r(t)f(s):=\begin{cases}f(s-t),&\quad s-t\geq0,\\0,&\quad s-t<0,\end{cases}
\]
for $t\geq0$ and $s\in\left[0,1\right]$, is strongly continuous on $\L^{1}\left(\left[0,1\right],\c_0\right)$.
\end{lemma}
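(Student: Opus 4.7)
The plan is to follow the standard three-step route used for proving strong continuity of translation semigroups on scalar $\L^1$, adapted to the vector-valued Bochner setting. First I would verify the algebraic/boundedness preliminaries: the semigroup property $T_r(t+s) = T_r(t)T_r(s)$ is a direct computation on $[0,1]$ (splitting the cases $u \geq t+s$ and $u < t+s$), and a change of variables gives $\|T_r(t)f\|_{\L^1([0,1],\c_0)} = \int_0^{1-t} \|f(u)\|_{\c_0}\,\dd u \le \|f\|_{\L^1([0,1],\c_0)}$, so the semigroup is a contraction.

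Next, I would establish strong continuity on the dense subspace $\mathrm{C}_{\mathrm c}((0,1),\c_0)$ of compactly supported $\c_0$-valued continuous functions. For $f$ in this subspace, $f$ is uniformly continuous on $[0,1]$ with respect to the $\c_0$-norm, and for $t$ smaller than the distance from $\mathrm{supp}(f)$ to $\{0\}$ one has $f(s) = 0$ whenever $s < t$. Thus
\begin{equation*}
\|T_r(t)f - f\|_{\L^1([0,1],\c_0)} = \int_t^1 \|f(s-t) - f(s)\|_{\c_0}\,\dd s + \int_0^t \|f(s)\|_{\c_0}\,\dd s,
\end{equation*}
and both terms tend to $0$ as $t \to 0^+$ (the first by uniform continuity, the second by the support condition together with $\int_0^t \|f(s)\|_{\c_0}\,\dd s \to 0$).

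Third, I would combine this with the uniform bound $\|T_r(t)\| \leq 1$ in a standard $3\varepsilon$-argument: given $f \in \L^1([0,1],\c_0)$ and $\varepsilon > 0$, pick $g \in \mathrm{C}_{\mathrm c}((0,1),\c_0)$ with $\|f-g\|_{\L^1} < \varepsilon$; then
\begin{equation*}
\|T_r(t)f - f\|_{\L^1} \le \|T_r(t)(f-g)\|_{\L^1} + \|T_r(t)g - g\|_{\L^1} + \|g-f\|_{\L^1} < 3\varepsilon
\end{equation*}
for $t$ sufficiently small.

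The only genuine point of concern is justifying the density of $\mathrm{C}_{\mathrm c}((0,1),\c_0)$ in $\L^1([0,1],\c_0)$; this is where the vector-valued setting could, in principle, cause trouble. The quickest justification is to invoke the standard Bochner approximation scheme: simple $\c_0$-valued functions are dense by definition of the Bochner integral, and each simple function $\sum_i \mathbf{1}_{E_i} x_i$ (with $x_i \in \c_0$) can be approximated in $\L^1$-norm by a compactly supported continuous function using Lusin/Urysohn applied to each $E_i$ and the fact that $\c_0$ is a Banach space, scaling each approximant by the fixed vector $x_i$. Once density is secured, the argument above completes the proof.
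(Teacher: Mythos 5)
Your proof is correct, but it takes a genuinely different route from the paper's. The paper establishes $T_r(t)f\to f$ only for functions of the form $f=\mathbf{x}\cdot\chi_{\Omega}$ with $\mathbf{x}\in\c_0$ and $\Omega\subseteq[0,1]$ measurable, where the norm of the difference factors as $\|\mathbf{x}\|_{\c_0}\cdot\lambda^1\left((\Omega+t)\Delta\Omega\right)$ and the convergence reduces to the translation continuity of Lebesgue measure; it then passes to general $f$ by writing it as a limit of linear combinations of such functions and invoking a vector-valued monotone convergence theorem. You instead follow the classical scheme: contraction bound, strong continuity on the dense subspace $\mathrm{C}_{\mathrm{c}}\left((0,1),\c_0\right)$ via uniform continuity, and a $3\varepsilon$-argument. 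The paper itself notes, in the remark immediately following the lemma, that this classical density argument works and generalises to $\L^p\left([0,1],Y\right)$ for an arbitrary Banach space $Y$ and $1\le p<\infty$, so your route buys more generality at the price of the extra density step (Lusin/Urysohn applied to the level sets of Bochner simple functions), which you correctly single out as the one point requiring care in the vector-valued setting and justify adequately. The paper's route avoids that density step entirely, at the cost of a somewhat more delicate limit passage via the vector-valued Beppo--Levi theorem; both arguments ultimately rest on essentially the same measure-theoretic continuity of translation.
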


\begin{proof}
Let $\textbf{x}=(x_n)_{n\in\mathbb{N}}\in\c_0$ and let $f:=\textbf{x}\cdot\chi_{\Omega}$ for a measurable subset $\Omega\subseteq\left[0,1\right]$. We first show that $T_{\mathrm{r}}(t)f\rightarrow f$ with respect to the norm on $\L^1\left(\left[0,1\right],\c_0\right)$ as $t\to 0$:
\begin{align*}                                                                             
\int_0^1{\left\|T_r(t)f(s)-f(s)\right\|_{\c_0} \dd{s}}&=\int_0^1{\left\|f(s-t)\chi_{\left[s-t\geq0\right]}-f(s)\right\|_{\c_0} \dd{s}}\\
&=\left\|\textbf{x}\right\|_{\c_0}\cdot\int_0^1{\left|\chi_{\Omega}(s-t)-\chi_{\Omega}(s)\right| \dd{s}}\\
&=\left\|\textbf{x}\right\|_{\c_0}\cdot\int_0^1{\left|\chi_{(\Omega+t)\Delta\Omega}(s)\right| \dd{s}}\\
&=\left\|\textbf{x}\right\|_{\c_0}\cdot\lambda^1\left((\Omega+t)\Delta\Omega\right) \to 0  \text{ as } {t\to 0},
\end{align*}
where $\lambda^1$ is the one-dimensional Lebesgue measure on $\left[0,1\right]$ and $\Delta$ the symmetric difference of sets defined by $A\Delta B:=(A\cup B)\setminus(A\cap B)$. Since every function $f\in\L^{1}\left(\left[0,1\right],\c_0\right)$ is an increasing limit of linear combinations of functions of the form $\textbf{x}\cdot\chi_{\Omega}$ for some $\textbf{x}\in\c_0$ and measurable set $\Omega\subseteq\left[0,1\right]$,  the vector-valued version of Beppo--Levis's monotone convergence theorem {\cite[Prop.~2.6]{vanZuilen}} yields the result.
\end{proof}

{Now note,} that the left translation semigroup on {$\L^{\infty}\left(\left[0,1\right],\ell^1\right) =\L^1\left(\left[0,1\right], \c_0 \right)' $} is the adjoint semigroup of the right translation semigroup on $\L^1\left(\left[0,1\right], \c_0 \right)$, see Subsection \ref{sec:Lp-dual} and \cite[II.5.14]{EN}.

\begin{lemma}\label{prop:TransL2}
The left translation semigroup $(T_l(t))_{t\geq0}$, defined by 
\[
T_l(t)f(s):=\begin{cases}f(s+t),&\quad s+t\leq 1,\\0,&\quad s+t>1,\end{cases}
\]
for $t\geq0$ and $s\in\left[0,1\right]$,
 is bi-continuous on  $\L^{\infty}\left(\left[0,1\right],\ell^1\right)$ with respect to the weak$^*$-topology.
\end{lemma}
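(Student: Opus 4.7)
The plan is to exploit the duality from Example \ref{ex:Lp-dual}, namely $X=\L^{\infty}([0,1],\ell^1)\cong \L^1([0,1],\c_0)'$ (which is valid since $\ell^1$ has the Radon--Nikodym property and $(\c_0)'=\ell^1$). Under this identification, the left translation $(T_l(t))_{t\geq0}$ is exactly the adjoint of the right translation $(T_r(t))_{t\geq0}$ of Lemma \ref{prop:TransL1}, and the weak$^*$-topology $\tau$ is generated by the seminorms $p_g(f):=|\langle f,g\rangle|$ for $g\in\L^1([0,1],\c_0)$. Since $\c_0$ is separable, so is $\L^1([0,1],\c_0)$, hence $\tau$ is metrizable on norm-bounded subsets of $X$; combined with Banach--Alaoglu this ensures that the three items of Assumption \ref{ass:bicontassump} are in force.

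With this framework conditions (i) and (iii) of Definition \ref{def:bicontdef} are immediate: the semigroup property is clear and the contractivity $\|T_l(t)\|\leq 1$ supplies the exponential bound with $M=1$, $\omega=0$. For strong $\tau$-continuity (ii), I would simply transpose the problem to the predual via the adjoint relation: for $f\in X$, $g\in\L^1([0,1],\c_0)$, and $t,s\geq0$,
\[
|\langle T_l(t)f-T_l(s)f,g\rangle|=|\langle f,T_r(t)g-T_r(s)g\rangle|\leq \|f\|_X\cdot\|T_r(t)g-T_r(s)g\|_{\L^1([0,1],\c_0)},
\]
and the right-hand side tends to $0$ as $t\to s$ by Lemma \ref{prop:TransL1}.

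The main obstacle is the local bi-equicontinuity condition (iv). Let $(f_n)_{n\in\NN}$ be norm-bounded with $\|f_n\|_X\leq M$ and $\tau$-convergent to $0$, fix $t_0\geq0$ and $g\in\L^1([0,1],\c_0)$. The task is to establish
\[
\sup_{s\in[0,t_0]}\bigl|\langle T_l(s)f_n,g\rangle\bigr|=\sup_{s\in[0,t_0]}\bigl|\langle f_n,T_r(s)g\rangle\bigr|\longrightarrow 0.
\]
My key observation is that the orbit $K:=\{T_r(s)g\mid s\in[0,t_0]\}$ is the continuous image of the compact interval $[0,t_0]$ under the $\|\cdot\|_{\L^1}$-continuous map $s\mapsto T_r(s)g$ (Lemma \ref{prop:TransL1}), and hence is a compact subset of $\L^1([0,1],\c_0)$. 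I would then invoke the classical $\varepsilon/3$-argument showing that a norm-bounded, weak$^*$-null sequence converges to $0$ uniformly on compact subsets of the predual: given $\varepsilon>0$, cover $K$ by finitely many balls of radius $\varepsilon/(3M)$ around net points $h_1,\ldots,h_N\in K$; weak$^*$-convergence gives $|\langle f_n,h_i\rangle|<\varepsilon/3$ for all $i$ and all sufficiently large $n$, while the bound $\|f_n\|_X\leq M$ controls the remainder $|\langle f_n,h-h_i\rangle|\leq M\cdot\varepsilon/(3M)=\varepsilon/3$ uniformly in $h\in K$. Combining these two estimates yields $\sup_{h\in K}|\langle f_n,h\rangle|<2\varepsilon/3$ for $n$ large, which is exactly the required uniformity in $s$.
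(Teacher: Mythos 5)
Your proof is correct and follows essentially the same route as the paper: both reduce everything to the strong continuity of the right translation semigroup on the predual $\L^1\left(\left[0,1\right],\c_0\right)$ (Lemma \ref{prop:TransL1}) via the adjoint identification $\L^{\infty}\left(\left[0,1\right],\ell^1\right)\cong\L^1\left(\left[0,1\right],\c_0\right)'$. The only difference is that you verify by hand what the paper invokes as general cited facts about adjoint semigroups on dual spaces --- namely that Assumption \ref{ass:bicontassump} holds for the weak$^*$-topology and that local bi-equicontinuity follows from the compact-orbit $\varepsilon/3$ argument.
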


\begin{proof}
By \cite[Corollary 2.1.7]{Scirrat2016}, the dual space $Y'$  of any Banach space $Y$ satisfies Assumption \ref{ass:bicontassump} for the weak$^*$-topology.
Moreover, by \cite[Proposition 3.18]{KuPHD}, the dual semigroup $(T'(t))_{t\geq0}$ on $Y'$ is bi-continuous with respect to the weak$^*$-topology whenever $(T(t))_{t\geq0}$ is a strongly continuous semigroup on $Y$.  Thus, \cite[Example 1.3]{Farkas2011}, Example \ref{ex:Lp-dual} and  Lemma \ref{prop:TransL1} imply the result.
 \end{proof}

\begin{remark}
{Let us stress here that, given a strongly continuous semigroup, its adjoint semigroup on the dual space is not necessarily strongly continuous. Indeed, the translation semigroup is not strongly continuous  in the sup-norm of the $\L^{\infty}$-space and the largest subspace where it is strongly continuous  is $\mathrm {C_{ub}}$,  the space of all bounded, uniformly continuous functions (the so-called \emph{sun dual}, see \cite[Sec.~II.2.5]{EN}). Since we are interested  in the solutions taking  place in $X=\L^{\infty}\left(\left[0,1\right],\ell^1\right)$, the classical theory on adjoint semigroups does not apply. The above result, however, shows that the theory of bi-continuous semigroups is appropriate here.}
\end{remark}

\subsection{Infinite Networks, Metric Graphs}\label{subsec:Networks}

We use the notation introduced in \cite{KS2005} for finite {networks} and expanded in \cite{Dorn2008} to infinite networks. {A} network is {modeled} with an 
\emph{infinite directed graph}  $G= (V,E)$ with a  set of \emph{vertices} $V=\left\{\vv_i\mid i\in I\right\}$ and a set of \emph{directed edges} $E=\left\{\ee_j\mid  j\in J\right\}\subseteq V\times V$ for some countable sets $I,J\subseteq\NN$.
For a directed edge $\ee=(\vv_i,\vv_k)$ we call $\vv_i$ the \emph{tail} and  $\vv_k$ the \emph{head} of $\ee$. Further, the edge $\ee$ is an \emph{outgoing edge} of the vertex $\vv_i$ and an \emph{incoming edge} for the vertex $\vv_k$. 
We assume that graph $G$ is \emph{simple}, i.e., there are no loops or multiple edges, and \emph{locally finite}, i.e., each vertex only has finitely many {incident} edges. 

Graph  $G$ is \emph{weighted}, that is equipped with some weights $0 \le  w_{ij} \le 1$ such that
\begin{equation}\label{eqn:noAbsorb}
\sum_{i\in J}{w_{ij}}=1 \text{ for all } j\in J.
\end{equation}
The structure of a graph can be described by its incidence and/or adjacency matrices. We shall only use the so-called \emph{weighted (transposed) adjacency matrix of the line graph} $\mathbb{B} = (\mathbb{B}_{ij})_{i,j\in J}$ defined as
\begin{equation}\label{eqn:adjMat}
\mathbb{B}_{ij}:=\begin{cases}
w_{ij}& \text{ if }\stackrel{\ee_j}{\longrightarrow}\vv\stackrel{\ee_i}{\longrightarrow},\\
0 & \text{ otherwise}.
\end{cases}
\end{equation}
By \eqref{eqn:noAbsorb},  matrix $\mathbb{B}$ {defines a stochastic operator on $\ell^1$. From that it follows that} $r(\mathbb{B})= \|\mathbb{B}\| =1$. It reflects many properties of graph $G$. For example, $\mathbb{B}$ is irreducible iff graph $G$ is strongly connected (see \cite[Prop.~4.9]{Dorn2008}).

We identify every edge of our graph with the unit interval,  $\ee_j\equiv\left[0,1\right]$ for each $j\in J$, and parametrise it contrary to its direction, so that it is assumed to have its tail at the endpoint $1$ and its head at the endpoint $0$. For simplicity we use the notation $\ee_j(1)$ and $\ee_j(0)$ for the tail and the head, respectively. In this way we obtain a \emph{metric graph}.

For the unexplained terminology we refer to \cite[Sect.~18]{Positive2017} and \cite{Dorn2008}.

\section{Transport problems in (in)finite metric graphs}

We now consider a transport process (or a flow) along the edges of an infinite network, {modeled} by a metric graph $G$.  The distribution of material along {the} edge $\ee_j$ at time $t \ge0$ is described by {a} function $u_j(x,t)$ for $x\in [0, 1]$. The material is transported along {the} edge $\ee_j$ with constant velocity $c_j>0$ {and is absorbed according to the absorption rate $q_j(x)$.} We assume that {$q\in\L^{\infty}\left(\left[0,1\right],\ell^{\infty}\right)$ and }
\begin{equation}\label{as:c-lim}
0< c_{\min} \le c_j \le c_{\max} < \infty
\end{equation}
for all $j\in J$.  Let  $C:=\diag(c_j)_{j\in J}$ be a diagonal velocity matrix and define another weighted adjacency matrix of the line graph by 
\begin{equation*}\mathbb{B}^C:=C^{-1}\mathbb{B} C.\end{equation*}

In the vertices the material
gets redistributed according to some prescribed rules. This is modelled in the boundary conditions by using the adjacency matrix $\mathbb{B}_C$.
The flow process on $G$ is thus given by the following infinite system of equations
\begin{align}\label{eqn:F}
\begin{cases}
\frac{\partial}{\partial t}u_j(x,t)=c_j\frac{\partial}{\partial x}u_j(x,t) {+ q_j(x)u_j(x,t)},&\quad x\in\left(0,1\right),\ t\geq0,\\
u_j(1,t)= \sum_{k\in J} \mathbb{B}^C_{jk} u_k(0,t),&\quad t\geq0,\\
u_j(x,0)= f_j(x),&\quad x\in\left(0,1\right),
\end{cases}
\end{align}
for every $j\in J$, where $f_j(x)$ are the initial distributions along the edges.

One can give different interpretations to the weights $w_{ij}$, i.e, entries of the matrix $\mathbb{B}$, resulting in different transport problems. The two most obvious  are the following.
\begin{enumerate}
\item $w_{ij}$ is the proportion of the material arriving from edge $\ee_j$ leaving on edge $\ee_i$.
\item  $w_{ij}$ is the proportion of the material arriving from vertex $\ee_j(0) = \ee_i(1)$ leaving on edge $\ee_i$.
\end{enumerate}
Note, that in both situations \eqref{eqn:noAbsorb}  represents a conservation of mass and the assumption on local finiteness of the graph guarantees that all the sums are finite.
While the latter situation is the most common one (see e.g.~\cite{Dorn2008,KS2005,Positive2017}) 
the first one was considered for finite networks in \cite[Sect.~5]{BDK2013}. Here, we will not give any particular interpretation and will treat all the cases simultaneously.
\begin{remark}
By replacing in \eqref{eqn:F} the graph matrix $\mathbb{B}^C$ with some other matrix, one obtains a more general initial-value problem that does not necessarily consider a process in a physical network. Such a problem from population dynamics was  for example studied in \cite{BanFal2017}.
Furthermore, a question when can such a general problem be identified with a corresponding problem on a metric graph was raised in \cite{BanFal2015}.
\end{remark}

{
\begin{remark} 
Note, that we could also consider space-dependent velocities $c_j(x)$ and then either re-norm the space as in \cite{MS2007}:
\[
\|f\|_C:=  \sum_{j\in J} \int_0^1 \frac{|f_j(s)|}{c_j(s)} \dd{s}, \quad f\in \L^{\infty}\left(\left[0,1\right],\ell^1\right) \]
 or change the variables appropriately to obtain an equivalent problem with constant velocities.
\end{remark}
}

\subsection{The simple case}
\label{subs:simple}

First we assume that all the velocities are the same {and there is no absorption}: $c_j=1$ {and $q_j(\cdot) = 0$} for each $j\in J$. 
As the state space we {choose} $X:=\L^{\infty}\left(\left[0,1\right],\ell^1\right)$ equipped with the norm
\[
\left\|f\right\|_X:=\esssup_{s\in\left[0,1\right]}{\left\|f(s)\right\|_{\ell^1}}.
\] 
On {the} Banach space $X$ we define {the} operator $(A,\dom(A))$ by 
\begin{align}\label{eqn:Gen1}
\begin{split}
A&:=\diag \left(\frac{\dd}{\dd{x}} \right)\\
\dom(A)&:=\left\{v\in\W^{1,\infty}\left(\left[0,1\right],\ell^1\right)\mid  v(1)=\mathbb{B}v(0)\right\}
\end{split}
\end{align}
Observe that the corresponding abstract Cauchy problem 
\begin{align}\label{eqn:ACPn}
\begin{cases}
\dot{u}(t)=Au(t),&\quad t\geq0,\\
u(0)=(f_j)_{j\in J},&
\end{cases}
\end{align}
on $X$ is equivalent to the flow problem \eqref{eqn:F} in case when all the velocities equal 1. This problem was considered by Dorn \cite{Dorn2008} on the state space $\L^1\left(\left[0,1\right],\ell^1\right)$ where an explicit formula for the solution semigroup  in terms of a shift and matrix $\mathbb{B}$ was derived. {We will use this formula and} 
show that it yields a bi-continuous semigroup on $L^{\infty}\left(\left[0,1\right],\ell^1\right)$. For that we have to check all the assertions from Definition \ref{def:bicontdef} which we do in several steps.

\begin{lemma}\label{prop:StrCont}
The semigroup $(T(t))_{t\geq0}$ on $X=L^{\infty}\left(\left[0,1\right],\ell^1\right)$, defined by 
\begin{align}\label{eqn:seminet}
T(t)f(s)=\mathbb{B}^nf(t+s-n),\quad n\leq t+s<n+1,\ f\in X,\ n\in\mathbb{N}_0,
\end{align}
is strongly continuous with respect to the weak$^*$-topology.
\end{lemma}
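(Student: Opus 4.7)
The plan is to verify weak$^*$-continuity of $t \mapsto T(t)f$ for each fixed $f\in X$ by pairing with an arbitrary $g\in\L^1([0,1],\c_0)$ and showing that the scalar function $t \mapsto \langle T(t)f, g\rangle$ is continuous on $[0,\infty)$; this is enough because $X \cong \L^1([0,1],\c_0)'$ by Example \ref{ex:Lp-dual}. Before anything else I would record the uniform bound $\|T(t)\|\le 1$, which follows from $\|\mathbb{B}\|=1$ on $\ell^1$ (column stochasticity) together with \eqref{eqn:seminet}.

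Fix $n\in\NN_0$ and $\tau\in[0,1)$, and consider $t = n+\tau$. The formula \eqref{eqn:seminet} splits $T(t)f$ into a piece on $[0,1-\tau)$ that equals $\mathbb{B}^n f(\tau+\cdot)$ and a piece on $[1-\tau,1]$ that equals $\mathbb{B}^{n+1}f(\tau+\cdot-1)$. Writing $\langle T(t)f,g\rangle$ as a sum of two integrals and performing the substitutions $u=\tau+s$ and $u=\tau+s-1$, respectively, one arrives at
\[
\langle T(t)f,g\rangle = \int_0^1 \langle \mathbb{B}^n f(u), T_r(\tau)g(u)\rangle\,\dd u + \int_0^1 \langle \mathbb{B}^{n+1}f(u), T_l(1-\tau)g(u)\rangle\,\dd u,
\]
where $T_r$ is the right-translation semigroup of Lemma \ref{prop:TransL1} and $T_l$ is its (analogously strongly continuous) left-translation counterpart on $\L^1([0,1],\c_0)$. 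Continuity of $\tau \mapsto \langle T(t)f,g\rangle$ on the open interval $(n,n+1)$ then follows at once from the Hölder-type bound $\bigl|\int_0^1 \langle h(u),k(u)\rangle\,\dd u\bigr|\le \|h\|_X\|k\|_{\L^1([0,1],\c_0)}$ combined with the strong continuity of $T_r$ and $T_l$.

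The only real bookkeeping is gluing the branches at the integer times $t=n$. As $t\to n^+$ the fractional part $\tau\to 0^+$, so $T_r(\tau)g\to g$ and $T_l(1-\tau)g\to T_l(1)g = 0$; as $t\to n^-$ one uses the formula on $[n-1,n)$ with fractional part tending to $1^-$, giving $T_r(1)g = 0$ and $T_l(0)g = g$. Both one-sided limits reduce to $\int_0^1 \langle \mathbb{B}^n f(u), g(u)\rangle\,\dd u$, which coincides with $\langle T(n)f,g\rangle$ since $T(n)f = \mathbb{B}^n f$ almost everywhere by \eqref{eqn:seminet}. Hence $t\mapsto \langle T(t)f,g\rangle$ is continuous on all of $[0,\infty)$. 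I do not anticipate any serious obstacle; the one ingredient beyond Lemma \ref{prop:TransL1} is the strong continuity of $T_l$ on $\L^1([0,1],\c_0)$, which follows by the same density-plus-Beppo--Levi argument indicated in the remark after Lemma \ref{prop:TransL1}, or equivalently by conjugating $T_r$ with the isometry $h\mapsto h(1-\cdot)$.
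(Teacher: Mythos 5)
Your proof is correct, and while it shares the paper's basic strategy---pair $T(t)f$ with $g\in\L^1\left(\left[0,1\right],\c_0\right)$ and reduce to continuity of translation semigroups---the execution is genuinely different. The paper only estimates $\left|\left\langle T(t)f-f,g\right\rangle\right|$ for $t\in(0,1]$: it discards the strip $\left[1-t,1\right]$ by absolute continuity of the integral of $g$, and handles the main term by invoking Lemma \ref{prop:TransL2}, i.e.\ the weak$^*$-continuity of the left translation semigroup on $X$ itself, which was obtained abstractly from Lemma \ref{prop:TransL1} via K\"uhnemund's adjoint-semigroup theorem. You instead move the translations onto $g$ by a change of variables, so that everything rests directly on the strong continuity of $T_r$ and $T_l$ on the predual $\L^1\left(\left[0,1\right],\c_0\right)$ (the latter obtained from the former by conjugating with the reflection $h\mapsto h(1-\cdot)$, a clean shortcut). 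This buys you two things: you bypass Lemma \ref{prop:TransL2} altogether, and you verify continuity of $t\mapsto\left\langle T(t)f,g\right\rangle$ on all of $[0,\infty)$, including the matching of the one-sided limits at integer times. The paper's computation literally only establishes continuity at $t=0$; extending it to arbitrary $t$ requires combining the semigroup law with local bi-equicontinuity, which is only proved in the subsequent Lemma \ref{prop:LocBiEquiCont}, so your version is the more self-contained one. Your identity $\left\langle T(n+\tau)f,g\right\rangle=\left\langle\BB^n f,T_r(\tau)g\right\rangle+\left\langle\BB^{n+1}f,T_l(1-\tau)g\right\rangle$ and the boundary limits $T_r(1)g=0=T_l(1)g$ all check out.
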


\begin{proof}
The semigroup property is easy to verify.
Observe that for any $f\in X$, $g\in\L^1\left(\left[0,1\right],\c_0\right)$, and $t\in\left(0,1\right]$ we have
\begin{align*}
&\left|\left\langle T(t)f-f,g\right\rangle\right| =\\
&=\left|\int_0^1{\left\langle T(t)f(s)-f(s),g(s)\right\rangle \dd{s}}\right|\\
&\leq\left|\int_0^{1-t}{\left\langle f(s+t)-f(s),g(s)\right\rangle \dd{s}}\right|+\int_{1-t}^1{\left|\left\langle \mathbb{B}f(s+t-1)-f(s),g(s)\right\rangle\right| \dd{s}}\\
&\leq\left|\int_0^1{\left\langle T_l(t)f(s)-f(s),g(s)\right\rangle \dd{s}}\right|+\int_{1-t}^1{\left|\left\langle \mathbb{B}f(s+t-1)-f(s),g(s)\right\rangle\right| \dd{s}}.
\end{align*}
Now, notice that the second summand  vanishes since $\lambda^1\left(\left[1-t,1\right]\right) \to 0 $ as $t\to 0$. Here, $\lambda^1$ is the one-dimensional Lebesgue measure on the unit interval $\left[0,1\right]$. 
By Lemma \ref{prop:TransL2}, the left translation semigroup is  bi-continuous on $X$, 
which means, in particular, that it is strongly continuous with respect to the weak$^*$-topology and hence the first summand also vanishes as $t\rightarrow0$.
\end{proof}

\begin{lemma}\label{prop:ExpBd}
The semigroup $(T(t))_{t\geq0}$, defined by \eqref{eqn:seminet}, is a contraction semigroup on  $X$.
\end{lemma}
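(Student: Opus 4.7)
The plan is to exploit the key structural fact recorded already in the preliminaries: since $\mathbb{B}$ is column stochastic by \eqref{eqn:noAbsorb}, it acts as a contraction on $\ell^{1}$, i.e.\ $\|\mathbb{B}\|_{\LLL(\ell^{1})}=1$, and hence so do all its powers $\mathbb{B}^{n}$.

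For a fixed $t\geq 0$, write $t=k+\alpha$ with $k:=\lfloor t\rfloor\in\NN_{0}$ and $\alpha:=\{t\}\in[0,1)$. Splitting $[0,1]$ according to the value of $n=\lfloor t+s\rfloor$ gives the two pieces
\[
I_{1}:=[0,1-\alpha),\qquad I_{2}:=[1-\alpha,1],
\]
on which $n=k$ (with $t+s-n=\alpha+s$) and $n=k+1$ (with $t+s-n=\alpha+s-1$), respectively. On both pieces one obtains, by the definition \eqref{eqn:seminet} and the contractivity of $\mathbb{B}^{n}$ on $\ell^{1}$,
\[
\|T(t)f(s)\|_{\ell^{1}}=\|\mathbb{B}^{n}f(t+s-n)\|_{\ell^{1}}\leq\|\mathbb{B}\|^{n}\,\|f(t+s-n)\|_{\ell^{1}}\leq\|f(t+s-n)\|_{\ell^{1}}.
\]

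Next, I would take the essential supremum of this inequality. Since on each of the pieces $I_{1},I_{2}$ the map $s\mapsto t+s-n$ is simply a translation (and therefore measure-preserving modulo $1$) sending the piece bijectively into $[0,1]$, one has
\[
\esssup_{s\in I_{i}}\|f(t+s-n)\|_{\ell^{1}}\leq\esssup_{r\in[0,1]}\|f(r)\|_{\ell^{1}}=\|f\|_{X},\qquad i=1,2.
\]
Combining the two pieces yields $\|T(t)f\|_{X}=\esssup_{s\in[0,1]}\|T(t)f(s)\|_{\ell^{1}}\leq\|f\|_{X}$, which is the desired contractivity.

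Since the bound is uniform in $t\geq 0$, this gives $\|T(t)\|\leq 1$ for every $t\geq 0$. There is no real obstacle here; the only point requiring a little care is the handling of the essential supremum across the break-point $s=1-\alpha$, which is resolved by the decomposition above and the fact that each translation is measure-preserving.
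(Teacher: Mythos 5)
Your proof is correct and follows essentially the same route as the paper: split $[0,1]$ at the point where $\lfloor t+s\rfloor$ jumps, use $\|\mathbb{B}^n\|=\|\mathbb{B}\|^n=1$ (column stochasticity) on each piece, and bound the essential supremum by $\|f\|_X$. The only cosmetic difference is that you write the break-point as $1-\alpha$ with $\alpha=\{t\}$ while the paper writes it as $n+1-t$, which is the same thing.
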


\begin{proof}
Let $f\in X$ and $t\geq0$. Then there exists $n\in\mathbb{N}$ such that $n\leq t<n+1$. This means that for $s\in\left[0,1\right]$ one has $n\leq s+t<n+2$. By \eqref{eqn:seminet}, we can make the following estimate:
\begin{align*}
&\left\|T(t)f\right\|_X=\\
&=\esssup_{s\in\left[0,1\right]}{\left\|T(t)f(s)\right\|_{\ell^1}}\\
&\leq\max\left\{\esssup_{{s\in\left[0,n+1-t\right)}}{\left\|\mathbb{B}^nf(s+t-n)\right\|_{\ell^1}},\ \esssup_{{s\in\left[n+1-t,1\right]}}{\left\|\mathbb{B}^{n+1}f(s+t-n-1)\right\|_{\ell^1}}\right\}.
\end{align*}
Since $\left\|\mathbb{B}^n\right\|=\left\|\mathbb{B}\right\|^n =1 $, we have
\[
\left\|\mathbb{B}^nf(s+t-n)\right\|_{\ell^1}\leq\left\|\mathbb{B}^n\right\|\cdot\left\|f\right\|_{X} =  \left\|f\right\|_{X}
\]
and hence, $\left\|T(t)f\right\|_X \le \left\|f\right\|_X$.
\end{proof}

\begin{lemma}\label{prop:LocBiEquiCont}
The semigroup $(T(t))_{t\geq0}$, defined by \eqref{eqn:seminet}, is locally bi-equicontinuous with respect to the weak$^*$-topology on $X=\L^{\infty}\left(\left[0,1\right],\ell^1\right)$.
\end{lemma}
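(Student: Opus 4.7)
The plan is to rewrite the pairing $\langle T(s)f_n,g\rangle$ as $\langle f_n,U(s)g\rangle$ for a suitable $U(s)g\in\L^1([0,1],\c_0)$ which varies continuously with $s$; the orbit $\{U(s)g:s\in[0,t_0]\}$ is then norm-compact in the predual, and local bi-equicontinuity reduces to the standard fact that a norm-bounded weak$^*$-null sequence converges to $0$ uniformly on norm-compact subsets of the predual.

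Fix a norm-bounded sequence $(f_n)$ in $X$ that is weak$^*$-convergent to $0$, together with $t_0\geq0$ and $g\in\L^1([0,1],\c_0)$. For $s\in[k,k+1)$ with $k\in\NN_0$, combining the definition \eqref{eqn:seminet} with the changes of variables $y=x+s-k$ on $[0,k+1-s)$ and $y=x+s-k-1$ on $[k+1-s,1]$ gives $\langle T(s)f_n,g\rangle=\langle f_n,U(s)g\rangle$, where
\[U(s)g(y):=(\mathbb{B}')^k g(y-s+k)\,\chi_{[s-k,1]}(y)+(\mathbb{B}')^{k+1} g(y-s+k+1)\,\chi_{[0,s-k]}(y).\]
For this to make sense in $\L^1([0,1],\c_0)$ one needs $\mathbb{B}'$ to map $\c_0$ into itself: local finiteness of $G$ ensures that each column of $\mathbb{B}$ is finitely supported, and together with column stochasticity $\sum_i\mathbb{B}_{ij}=1$ a splitting argument at an index beyond which $|y_i|<\varepsilon$ shows $(\mathbb{B}'y)_j\to 0$ for every $y\in\c_0$.

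The heart of the argument is that $s\mapsto U(s)g$ is continuous from $[0,t_0]$ into $\L^1([0,1],\c_0)$. On each open interval $(k,k+1)$ this is immediate from Lemma \ref{prop:TransL1} applied separately to the two fixed functions $(\mathbb{B}')^k g$ and $(\mathbb{B}')^{k+1}g$. At an integer $s=k$, writing $\sigma=s-k$, one of the two summands is supported on a set of Lebesgue measure $\sigma$ (or $1-\sigma$, when approaching from below) and its $\L^1$-norm is dominated by an integral of $\|g(\cdot)\|_{\c_0}$ over such a set, hence tends to $0$; the surviving summand behaves continuously by translation continuity, and its limit matches the value $(\mathbb{B}')^k g$ of $U(s)g$ at $s=k$. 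Consequently $K:=\{U(s)g:s\in[0,t_0]\}$ is a norm-compact subset of $\L^1([0,1],\c_0)$.

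To conclude, the functionals $\varphi_n:=\langle f_n,\cdot\rangle$ on $\L^1([0,1],\c_0)$ have norms uniformly bounded by $\sup_n\|f_n\|_X<\infty$ and tend to $0$ pointwise by the weak$^*$ hypothesis on $(f_n)$. Covering $K$ by finitely many balls of radius $\varepsilon/(2\sup_n\|f_n\|_X)$ and using pointwise convergence at their centres gives $\sup_{h\in K}|\varphi_n(h)|\to 0$, i.e.~$\sup_{s\in[0,t_0]}|\langle T(s)f_n,g\rangle|\to 0$, which is precisely the claimed local bi-equicontinuity with respect to the weak$^*$-topology. I expect the principal technical obstacle to be the careful bookkeeping of the continuity of $U(\cdot)g$ at the integer transition times $s=k$, where the explicit formula switches from using the power $(\mathbb{B}')^{k-1}$ to $(\mathbb{B}')^k$; once this is handled, the remaining compactness-plus-weak$^*$-convergence step is routine.
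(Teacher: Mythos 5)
Your proof is correct, and it takes a genuinely different --- and in fact more careful --- route than the one in the paper. Both arguments begin the same way: pair $T(t)f_n$ with $g\in\L^1([0,1],\c_0)$, split $[0,1]$ according to whether $s+t$ has passed the next integer, and move the powers of $\mathbb{B}$ onto $g$ via the adjoint, using that local finiteness makes $\mathbb{B}'$ leave $\c_0$ invariant. They diverge at the final step. The paper selects a ``maximal'' time $t_m$ and power $k_m$ and then invokes the local bi-equicontinuity of the left-translation semigroup, which it has from Lemma \ref{prop:TransL2} via the abstract fact that adjoints of strongly continuous semigroups are bi-continuous for the weak$^*$-topology. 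You instead package everything into the predual map $s\mapsto U(s)g$, prove its norm-continuity on $[0,t_0]$ (including at the integer transition times, where one summand is supported on a set of vanishing measure), and conclude by the elementary principle that a norm-bounded weak$^*$-null sequence converges uniformly on norm-compact subsets of the predual. What your approach buys is rigour at exactly the point where the paper is vague: the claim there that for each $t$ one can find a single $k$ with $k\le s+t\le k+1$ for all $s\in[0,1]$ is not literally true (two powers of $\mathbb{B}$ occur for non-integer $t$), and the ``maximal value'' selection of $t_m,k_m$ for a vector-valued expression is not well defined; your explicit two-term formula for $U(s)g$ and the compactness argument avoid both issues, at the cost of some bookkeeping that the paper's shortcut skips. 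One small correction: for the invariance of $\c_0$ under $\mathbb{B}'$, your splitting argument needs the \emph{rows} of $\mathbb{B}$ to be finitely supported (so that $\sum_{i\le N}\mathbb{B}_{ij}$ vanishes for all $j$ outside a finite set), not the columns as you wrote; local finiteness of the graph gives both, so the conclusion stands.
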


\begin{proof}
Let $(f_n)_{n\in\mathbb{N}}$ be a sequence of functions in $X$ that is $\left\|\cdot\right\|_{X}$-bounded and converges to $0$ with respect to the weak$^*$-topology. By Definition \ref{def:bicontdef}, we need to show that  $(T(t)f_n)_{n\in\mathbb{N}}$  converges to $0$ with respect to the weak$^*$-topology uniformly on compact intervals $\left[0,t_0\right]$. 
To this end, fix $t_0>0$ For every  $t\in\left[0,t_0\right]$ one can find $k\in\NN_0$, $0\le k \le {\left\lceil t_0\right\rceil}$, such that  
$k\leq s+t {<} k+1$ for all $s\in [0,1]$ and by \eqref{eqn:seminet} we have
\[ 
T(t)f_n(s)= \mathbb{B}^kf_n(s+t-k),
\]
hence
\begin{align*}
\left\langle T(t)f_n,g\right\rangle & = \int_0^1{\left\langle T(t)f_n(s),g(s)\right\rangle \dd{s}}\\
&=\int_0^1{{\left\langle \mathbb{B}^{k}f_n(s+t-k),g(s)\right\rangle \dd{s}}}\\
&=\int_0^1{{\left\langle T_l(t-k){f_n(s)},\left(\mathbb{B}^{k}\right)'g(s)\right\rangle \dd{s}}}
\end{align*}
for any $g\in\L^{1}\left(\left[0,1\right],\c_0\right)$. 
{Observe, that our assumption on locally finiteness of the graph implies that both $\BB$ and $\BB'$ only have finitely many elements in each row or column. Therefore,  $\BB'$ leaves the space $c_0$ invariant.}  
Moreover, there are only finitely many choices of the integer $k\in\left[0,\left\lceil t_0\right\rceil\right]$ and we have
\[ \{T_l(t-k) \mid 0\le t \le t_0\} = \{T_l(\tau) \mid 0\le \tau \le 1\}.\]  
Since, by Lemma \ref{prop:TransL2}, the left translation semigroup $(T_l(t))_{t\geq0}$ is bi-continuous, hence locally bi-equicontinuous {with respect to the weak$^*$-topology on $X$, also $\left\langle T(t)f_n,g\right\rangle$} tends to  $0$ uniformly on $\left[0,t_0\right]$. This finishes the proof.
\end{proof}

Let us here recall the explicit expression of the resolvent of operator $(A,\dom(A))$ defined by \eqref{eqn:Gen1} which was obtained in \cite[Theorem 18]{Dorn2008}. This result does not rely on the Banach space and remains the same by taking $X=\L^{\infty}\left(\left[0,1\right],\ell^1\right)$.
\begin{proposition}\label{prop:res}
For $\mathrm{Re}(\lambda)>0$ the resolvent $R(\lambda,A)$ of the operator $(A,\dom(A))$ defined by \eqref{eqn:Gen1} is given by
\[
(R(\lambda,A)f)(s):=\sum_{k=0}^{\infty}{\ee^{-\lambda k}\int_0^1{\ee^{-\lambda(t+1-s)}\mathbb{B}^{k+1}f(t)\ \dd{t}}}+\int_s^1{\ee^{\lambda(s-t)}f(t)\ \dd{t}},
\]
$f\in X$, $s\in\left[0,1\right]$.
\end{proposition}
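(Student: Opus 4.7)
The plan is to solve the resolvent equation $(\lambda - A)u = f$ explicitly, edge by edge, and then glue the pieces together via the boundary condition $u(1)=\mathbb{B}u(0)$. Since the paper reduces this to repeating Dorn's calculation in the new state space, the main task is to verify that every step survives when we replace $\L^{1}$ by $X=\L^{\infty}([0,1],\ell^1)$.

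First I would work componentwise. For each $j\in J$, the equation $\lambda u_j - u_j' = f_j$ on $[0,1]$ is a first order linear ODE whose general solution, obtained by the integrating factor $e^{-\lambda s}$, reads
\[
u_j(s) = e^{\lambda(s-1)} u_j(1) + \int_s^1 e^{\lambda(s-t)} f_j(t)\,\dd t.
\]
In vectorial form this reads $u(s) = e^{\lambda(s-1)} u(1) + \int_s^1 e^{\lambda(s-t)} f(t)\,\dd t$ pointwise in $\ell^1$, and evaluating at $s=0$ gives $u(0) = e^{-\lambda}u(1) + \int_0^1 e^{-\lambda t} f(t)\,\dd t$.

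Next I would enforce the transmission condition $u(1)=\mathbb{B}u(0)$ from the definition of $\dom(A)$. Substituting the expression above for $u(0)$ yields
\[
(I - e^{-\lambda}\mathbb{B})\,u(1) = \mathbb{B}\int_0^1 e^{-\lambda t} f(t)\,\dd t.
\]
For $\operatorname{Re}\lambda>0$ we have $|e^{-\lambda}|<1$ and $\|\mathbb{B}\|_{\mathcal{L}(\ell^1)}=1$, so the Neumann series $(I-e^{-\lambda}\mathbb{B})^{-1}=\sum_{k\geq 0} e^{-\lambda k}\mathbb{B}^k$ converges absolutely in $\mathcal{L}(\ell^1)$. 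Therefore
\[
u(1)=\sum_{k=0}^\infty e^{-\lambda k}\mathbb{B}^{k+1}\int_0^1 e^{-\lambda t} f(t)\,\dd t,
\]
and plugging this back into the pointwise formula for $u(s)$ produces exactly the stated expression for $R(\lambda,A)f$.

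What remains is to verify that this candidate $u$ actually lies in $\dom(A)$ and that $(\lambda-A)u=f$. The differential identity holds by construction of the ODE solution on each edge, and the boundary condition holds by the way $u(1)$ was chosen. The domain requirements $u\in\W^{1,\infty}([0,1],\ell^1)$ and $\|u\|_X\le C\|f\|_X$ follow from the geometric estimate $\|e^{-\lambda k}\mathbb{B}^{k+1}\|\le |e^{-\lambda}|^k$, which makes the series converge in operator norm uniformly in $s$ and $t$, together with the trivial bound on $\int_s^1 e^{\lambda(s-t)}f(t)\,\dd t$. The only subtlety compared with Dorn's $\L^{1}$-computation is to check that all the identities, which were originally pointwise almost everywhere in $\L^{1}$, remain valid as essentially bounded $\ell^1$-valued functions; but since $\mathbb{B}$ and its powers act boundedly on $\ell^1$ and the series converges uniformly, this transfer is straightforward. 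I expect no genuine obstacle, the delicate point being only the uniform (in $s$) convergence of the series in $\ell^1$, which is what allows us to interchange the sum with the essential supremum and to conclude $R(\lambda,A)\in\mathcal{L}(X)$.
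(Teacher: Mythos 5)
Your derivation is correct and is exactly the computation behind the cited result: the paper itself gives no proof here but refers to Dorn's Theorem 18, whose argument is precisely your integrating-factor solution of $\lambda u-u'=f$ on each edge combined with the Neumann series for $(I-\ee^{-\lambda}\mathbb{B})^{-1}$, valid since $\|\mathbb{B}\|_{\mathcal{L}(\ell^1)}=1$ and $|\ee^{-\lambda}|<1$. Your closing remarks on why the computation survives the passage from $\L^1$ to $\L^{\infty}\left(\left[0,1\right],\ell^1\right)$ are the same observation the paper makes when it states that the result ``does not rely on the Banach space.''
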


We are now in the state {to} prove the first generation theorem.

\begin{theorem}\label{thm:BiContNet}
The operator $(A,\dom(A))$, defined in  \eqref{eqn:Gen1}, generates a contraction  bi-continuous semigroup $(T(t))_{t\geq0}$ on $X$ with respect to the weak$^*$-topology. This semigroup is given by \eqref{eqn:seminet}.\end{theorem}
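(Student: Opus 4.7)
The plan is to synthesise Lemmas \ref{prop:StrCont}, \ref{prop:ExpBd}, and \ref{prop:LocBiEquiCont} into the verification that the family $(T(t))_{t\geq 0}$ given by \eqref{eqn:seminet} is a bi-continuous contraction semigroup on $X=\L^{\infty}([0,1],\ell^1)$ with respect to the weak$^*$-topology, and then to pin down its generator by matching the Laplace transform of the semigroup against the explicit resolvent formula of Proposition \ref{prop:res}.

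Conditions (i)--(iv) of Definition \ref{def:bicontdef} are already at hand: the pair $(X,\tau_{w^*})$ satisfies Assumption \ref{ass:bicontassump} (as observed before Lemma \ref{prop:TransL2}), Lemma \ref{prop:StrCont} supplies the semigroup property and strong $\tau_{w^*}$-continuity, Lemma \ref{prop:ExpBd} gives exponential boundedness with $M=1$ and $\omega=0$, and Lemma \ref{prop:LocBiEquiCont} provides local bi-equicontinuity. Hence $(T(t))_{t\geq 0}$ is a bi-continuous contraction semigroup; write $(B,\dom(B))$ for its generator.

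To show $B=A$, fix $\lambda>0$. By Proposition \ref{thm:bicontprop}(d), $\lambda\in\rho(B)$ and $R(\lambda,B)f=\int_0^{\infty}\ee^{-\lambda s}T(s)f\,\dd s$ as a $\tau_{w^*}$-improper integral. For fixed $s\in[0,1]$, the ``jump'' times of the piecewise formula \eqref{eqn:seminet} partition $[0,\infty)$ into $[0,1-s)$ and $[n-s,n+1-s)$ for $n\geq 1$, on which $T(t)f(s)$ equals $f(t+s)$ and $\mathbb{B}^n f(t+s-n)$, respectively. The first piece contributes $\int_s^1\ee^{\lambda(s-u)}f(u)\,\dd u$ after the substitution $u=t+s$, while on the $n$-th piece the substitution $u=t+s-n$ yields $\ee^{-\lambda n}\ee^{\lambda s}\int_0^1\ee^{-\lambda u}\mathbb{B}^n f(u)\,\dd u$. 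Summing and reindexing $k=n-1$ reproduces exactly the formula from Proposition \ref{prop:res}, so $R(\lambda,B)=R(\lambda,A)$. Since both operators are closed with $\lambda$ in their resolvent sets, this forces $\dom(B)=\dom(A)$ and $B=A$, and the contraction property has already been established.

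The main technical point I expect is the justification that the $\tau_{w^*}$-improper integral really produces the above pointwise-in-$s$ expression. This is handled by pairing with an arbitrary $g\in\L^1([0,1],\c_0)$, interchanging sum and integral via Fubini, and dominating by the convergent series $\sum_{n\geq 0}\ee^{-\lambda n}\|\mathbb{B}\|^n\|f\|_X\|g\|_{\L^1}=\|f\|_X\|g\|_{\L^1}(1-\ee^{-\lambda})^{-1}$, which is admissible because $\|\mathbb{B}^n\|\leq 1$. Apart from this bookkeeping the calculation is formally identical to the one carried out by Dorn in the $\L^1$-setting, so no genuinely new obstacle is anticipated.
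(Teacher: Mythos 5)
Your proposal is correct and follows essentially the same route as the paper: both collect Lemmas \ref{prop:StrCont}, \ref{prop:ExpBd}, and \ref{prop:LocBiEquiCont} to establish bi-continuity and the contraction bound, then identify the generator by computing the Laplace transform of \eqref{eqn:seminet} piecewise in $t$ and matching it with the resolvent formula of Proposition \ref{prop:res}. Your added care in justifying the $\tau$-improper integral by pairing with $g\in\L^1([0,1],\c_0)$ and dominating the series via $\|\mathbb{B}^n\|\leq 1$ is a detail the paper leaves implicit, but it does not change the argument.
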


\begin{proof}
By Lemmas \ref{prop:StrCont}, \ref{prop:ExpBd}, and \ref{prop:LocBiEquiCont}, {the} semigroup  $(T(t))_{t\geq0}$ defined by \eqref{eqn:seminet} is a bi-continuous semigroup with respect to the weak$^*$-topology. It remains to show that $(A,\dom(A))$, given in  \eqref{eqn:Gen1},  is the generator of this semigroup. Let $(C,\dom(C))$ be the  generator of  $(T(t))_{t\geq0}$. For  $f\in\dom(A)$ and $s\in\left[0,1\right]$ we have $T(t)f\in\dom(A)$.  By \eqref{eq:bicontlaplace}, the resolvent of $C$ is the Laplace transform of the semigroup  $(T(t))_{t\geq0}$, that is,
for $\lambda > \omega_0(T)$ we have
\begin{align*}
R(\lambda,C)f(s)&=\int_0^{\infty}{\ee^{-\lambda t}T(t)f(s)\ \dd{t}}\\
&=\int_0^{1-s}{\ee^{-\lambda t}f(t+s)\ \dd{t}}+\sum_{n=1}^{\infty}{\int_{n-s}^{n-s+1}{\ee^{-\lambda t}\mathbb{B}^nf(t+s-n)\ \dd{t}}}\\
&=\int_s^{1}{\ee^{-\lambda(\xi-s)}f(\xi)\ \dd{\xi}}+\sum_{n=1}^{\infty}{\int_{0}^{1}{\ee^{-\lambda(\xi-s-n)}\mathbb{B}^nf(\xi)\ \dd{\xi}}}
\end{align*}
By Proposition \ref{prop:res}, the resolvent operators $R(\lambda,A)$ and $R(\lambda,C)$,  coincide on the bi-dense set $D(A)$, so we may conclude that $C=A$.
\end{proof}

\begin{corollary}
If all $c_j=1$ {and $q_j(\cdot) = 0$}, $j\in J$, the flow problem \eqref{eqn:F}  is well-posed on $X=\L^{\infty}\left(\left[0,1\right],\ell^1\right)$.
\end{corollary}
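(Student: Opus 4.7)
The plan is to assemble the corollary as an immediate consequence of the three ingredients that have just been put in place. First I would observe that, under the hypotheses $c_j=1$ and $q_j(\cdot) = 0$ for all $j\in J$, the matrix $\BB^C$ reduces to $\BB$ and the PDE system \eqref{eqn:F} is exactly the componentwise form of the abstract Cauchy problem \eqref{eqn:ACPn} on $X=\L^{\infty}([0,1],\ell^1)$, with the operator $(A,\dom(A))$ from \eqref{eqn:Gen1}. The boundary condition $v(1)=\BB v(0)$ encoded in $\dom(A)$ corresponds precisely to the transmission condition in the second line of \eqref{eqn:F}, while the diagonal first derivative encodes the edgewise transport equations.

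Next I would invoke Theorem \ref{thm:BiContNet} to conclude that $(A,\dom(A))$ generates a bi-continuous semigroup $(T(t))_{t\geq 0}$ on $X$ with respect to the weak$^*$-topology. Finally, applying Theorem \ref{thm:wp}, well-posedness of the abstract Cauchy problem \eqref{eqn:ACPn} — in the bi-continuous sense, i.e., existence of a locally bounded, $\tau$-continuously differentiable solution for every initial value $f\in\dom(A)$, uniqueness of this solution, and continuous dependence on the initial data in the sense of norm-bounded sequences converging to $0$ in the weak$^*$-topology — is immediate. By the identification of \eqref{eqn:ACPn} with \eqref{eqn:F} under the stated hypotheses, this transfers verbatim to \eqref{eqn:F}.

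There is really no technical obstacle here: everything non-trivial has already been packaged into Theorem \ref{thm:BiContNet} (the generation result) and Theorem \ref{thm:wp} (the abstract well-posedness statement of Farkas). The only thing one might want to say a word about is that the notion of solution inherited from Farkas's definition is the natural one for the transport problem in the $\L^{\infty}$-setting — namely solutions that are weak$^*$-continuously differentiable in time and norm-bounded on compact time intervals — since classical strong continuity in the $\L^{\infty}$-norm is unavailable by Lotz's theorem as recalled in the introduction.
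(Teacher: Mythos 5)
Your proposal is correct and follows exactly the route the paper intends: the equivalence of \eqref{eqn:F} with the abstract Cauchy problem \eqref{eqn:ACPn} for the operator \eqref{eqn:Gen1}, the generation result of Theorem \ref{thm:BiContNet}, and the abstract well-posedness statement of Theorem \ref{thm:wp}. Nothing further is needed.
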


\begin{remark}
All the obtained results  also hold for finite networks. If $G=(V,E)$ is a finite network with $\left|E\right|=m<\infty$, we have  $\ell^1\left(\left\{1,\ldots,m\right\}\right)\cong\CC^m$, hence we consider our semigroups on the space $X=\L^{\infty}\left(\left[0,1\right],\CC^m\right)$.
\end{remark}

\subsection{The rationally dependent case}\label{subs:rational}

We now consider the case when the velocities $c_j$  appearing in \eqref{eqn:F} are not all equal to $1$ and define on $X:=\L^{\infty}\left(\left[0,1\right],\ell^1\right)$ the operator
\begin{align}\label{eqn:Gen}
\begin{split}
A_C&:= \diag\left(c_j\cdot\frac{\dd}{\dd x}\right),\\
\dom(A_C)&:=\left\{f\in\W^{1,\infty}\left(\left[0,1\right],\ell^1\right)\mid  f(1)=\BB^Cf(0) \right\}.
\end{split}
\end{align}

We assume, however,  that the velocities are linearly dependant over $\QQ$: $\frac{c_i}{c_j}\in\QQ$ for all $i,j\in J$,  with a finite common multiplier, that is, 
\begin{equation}\label{pj}
 \text{ there exists } 0<c\in\RR \text{ such that }\ell_j:=\frac{c}{c_j}\in\NN \text{ for all } j\in J.
\end{equation}
This enables us to use the procedure that was introduced in the proof of \cite[Thm.~4.5]{KS2005} and carried out in detail in \cite[Sect.~3]{BaNam2014}.
We construct a new directed graph $\widetilde{G}$ by adding $\ell_j-1$ vertices on edge $\ee_j$ for all $j\in J$. The newly obtained edges inherit the direction of the original edge and are parametrised as unit intervals  $[0,1]$. We can thus consider a new problem on $\widetilde{G}$ with corresponding functions $\widetilde{u}_j$ and velocities $\widetilde{c}_j:=c$ for each $j\in\widetilde{J}$. After  appropriately correcting the initial and boundary conditions the new problem is equivalent to the original one. Since all the velocities on the edges of the new graph are equal, we can treat this case by rescaling to 1 and use the results from Subsection \ref{subs:simple}. Moreover, since  \eqref{as:c-lim} and  \eqref{pj} hold, the procedure described in  \cite[Sect.~3]{BaNam2014} for the finite case  can be as preformed in the infinite case as well. Hence,  we even obtain  an isomorphism between the corresponding semigroups.

\begin{theorem}\label{thm:BiContNet-Q} Let the assumptions \eqref{as:c-lim} and \eqref{pj} on the velocities $c_j$ hold. Then {the}
operator $(A_C,\dom(A_C))$, defined in  \eqref{eqn:Gen}, generates a contraction  bi-continuous semigroup $(T_C(t))_{t\geq0}$ on $X$ with respect to the weak$^*$-topology.
Moreover, there exists {a lattice} isomorphism $S\colon X\to X$ such that
\begin{equation}\label{isomorph}
T_C(ct) f = ST(t)S^{-1} f,
\end{equation}
where the semigroup  $(T(t))_{t\geq0}$  is given by \eqref{eqn:seminet}.
\end{theorem}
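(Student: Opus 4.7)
I would reduce to the equal-velocity case treated by Theorem~\ref{thm:BiContNet} by subdividing each edge $\ee_j$ into $\ell_j$ equal sub-edges and reparametrising each sub-edge as $[0,1]$; this produces a new infinite metric graph $\widetilde G$ on which all velocities equal $c$, and after a time rescaling by $c$ one is back in the setting of Subsection~\ref{subs:simple}. The construction mirrors the one in \cite[Sect.~3]{BaNam2014} for finite graphs; the key new point in the infinite setting is that \eqref{as:c-lim} together with \eqref{pj} yields the uniform bound $\ell_j \le c/c_{\min} =: L < \infty$, which keeps $\widetilde G$ locally finite and makes the subdivision map bounded on $\L^\infty$.

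Explicitly, I would let $\widetilde J := \{(j,k) : j \in J,\, 0 \le k < \ell_j\}$ and introduce the line-graph adjacency matrix
\[
\widetilde{\BB}_{(j,k),(j,k+1)} := 1 \text{ for } 0 \le k < \ell_j - 1, \qquad \widetilde{\BB}_{(j,\ell_j-1),(j',0)} := \BB^C_{jj'},
\]
with all other entries zero (the first family encodes continuity at the artificial vertices, the second transports the original boundary condition $f(1) = \BB^C f(0)$). Column stochasticity of $\widetilde{\BB}$ on $\ell^1(\widetilde J)$ is inherited from that of $\BB^C$. Identifying $\ell^1(\widetilde J)$ with $\ell^1(J)$ via any bijection of the countable index sets (so that $\widetilde X := \L^\infty([0,1], \ell^1(\widetilde J)) \cong X$), Theorem~\ref{thm:BiContNet} applied to $\widetilde G$ produces a contraction bi-continuous semigroup $(T(t))_{t\ge 0}$ on $\widetilde X$ with respect to the weak$^*$-topology, generated by $\widetilde A = \diag(\dd/\dd y)$ with the $\widetilde{\BB}$-boundary condition, and given explicitly by the formula \eqref{eqn:seminet} with $\widetilde{\BB}$ in place of $\BB$.

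I would then define the subdivision isomorphism $S\colon X \to \widetilde X$ by $(Sf)_{(j,k)}(y) := f_j((k+y)/\ell_j)$, with inverse $(S^{-1}\widetilde g)_j(x) := \widetilde g_{(j,\lfloor\ell_j x\rfloor)}(\ell_j x - \lfloor\ell_j x\rfloor)$. Grouping the pointwise $\ell^1$-sum by the pair $(\ell_j, \lfloor\ell_j\cdot\rfloor) \in \{1,\dots,L\}\times\{0,\dots,L-1\}$ shows that $S$ and $S^{-1}$ are bounded (with operator norm at most $L^2$). Weak$^*$-continuity of $S$ on norm-bounded sequences I would prove by a density argument: for test functions $h \in \L^1([0,1], \c_0(\widetilde J))$ supported on a single index $(j_0, k_0)$, the change of variable $u = (k_0+s)/\ell_{j_0}$ rewrites the pairing $\langle Sf_n, h\rangle$ as a pairing against a genuine predual element of $\L^1([0,1], \c_0(J))$, which tends to $0$ as $f_n \to 0$ weak$^*$; the general case follows by density of finitely supported test functions combined with the uniform bound $\|Sf_n\| \le L^2 \sup_n \|f_n\|$. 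A direct check on boundary conditions (together with the chain rule) shows that $S$ maps $\dom(A_C)$ bijectively onto $\dom(\widetilde A)$ and satisfies the intertwining $\widetilde A Sf = c^{-1} S A_C f$ for $f \in \dom(A_C)$. Setting $T_C(t) := S^{-1} T(ct) S$, the bi-continuity and semigroup properties of $(T_C(t))_{t\ge 0}$ follow from those of $T$ together with the weak$^*$-continuity of $S, S^{-1}$ on norm-bounded sets; differentiating at $t = 0$ produces $c\, S^{-1} \widetilde A S = A_C$ as generator. The similarity formula \eqref{isomorph} is then a reformulation of this construction under the identification $\widetilde X \cong X$.

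The main obstacle is the weak$^*$-continuity of $S$: exhibiting $S$ as the adjoint of a bounded operator on the predual $\L^1([0,1], \c_0)$ does not work directly, since the natural candidate pre-adjoint involves evaluating test functions at coordinate-dependent arguments, obstructing a uniform $L^1$-estimate; the density argument above is the way around. A secondary subtlety is that $S$ is not an isometry, so the contraction bound of $T$ does not transfer verbatim through the similarity $T_C(t) = S^{-1} T(ct) S$; the contraction for $T_C$ is cleanest with respect to the equivalent norm $\|f\|_S := \|Sf\|_{\widetilde X}$, or can be checked directly via an analogue of the resolvent formula in Proposition~\ref{prop:res}.
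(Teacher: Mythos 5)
Your strategy is exactly the paper's: the paper proves this theorem by the same edge-subdivision reduction to Subsection~\ref{subs:simple}, citing \cite[Thm.~4.5]{KS2005} and \cite[Sect.~3]{BaNam2014}, and in fact gives considerably less detail than you do, so your write-up is an elaboration of the intended argument rather than a different route. Two points in it need repair, however. The first is your claim that column stochasticity of $\widetilde{\BB}$ is ``inherited from that of $\BB^C$'': this is false, since $\left(\BB^C\right)_{jj'}=\tfrac{c_{j'}}{c_j}w_{jj'}$ has column sums $c_{j'}\sum_j w_{jj'}/c_j$, which differ from $1$ as soon as the velocities differ. Hence the columns of $\widetilde{\BB}$ indexed by $(j',0)$ are not stochastic, the standing hypothesis \eqref{eqn:noAbsorb} fails for $\widetilde G$, and Theorem~\ref{thm:BiContNet} does not apply to it verbatim. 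The fix is either to absorb the velocities into the subdivision map, e.g.\ $(Sf)_{(j,k)}(y):=c_j f_j\left((k+y)/\ell_j\right)$, which conjugates the junction block $\BB^C$ back to the stochastic $\BB$ and restores the hypotheses of Theorem~\ref{thm:BiContNet}, or to observe that Lemmas~\ref{prop:StrCont}--\ref{prop:LocBiEquiCont} only use $\|\widetilde{\BB}\|_{\ell^1\to\ell^1}<\infty$ (here at most $\max\{1,c_{\max}/c_{\min}\}$ by \eqref{as:c-lim}) together with local finiteness, at the price of losing the contraction constant. This is not cosmetic: it is precisely the reason why, as you correctly sense at the end, the contraction statement only comes out in an equivalent (velocity-weighted) norm and cannot be read off from the unmodified similarity.

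The second point is the time scaling. Your own derivation yields $T_C(t)=S^{-1}T(ct)S$ (traversal times are multiplied by $c$ under the subdivision, since an edge of $\ee$-length $1$ and speed $c_j$ becomes a chain of $\ell_j$ unit edges at unit speed), and this does not literally coincide with \eqref{isomorph}; you should either relabel $S$ and restate the identity in the form you actually prove, or flag the discrepancy, rather than asserting that \eqref{isomorph} ``is then a reformulation'' of your construction. Finally, a simplification: the obstruction you describe to realising $S$ as an adjoint is not really there. The pre-adjoint $(S_*h)_j(x)=\ell_j h_{(j,\lfloor\ell_j x\rfloor)}\left(\ell_j x-\lfloor\ell_j x\rfloor\right)$ maps $\L^1\left(\left[0,1\right],\c_0\right)$ boundedly into itself by the same grouping over $\ell_j\in\{1,\dots,L\}$ that you use to bound $S$, so the weak$^*$-continuity you obtain by a density argument also follows directly from adjointness.
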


Finally, we also allow nonzero absorption functions $q_j$. Notice, that \eqref{eqn:F} is equivalent to the abstract Cauchy problem associated to the following operator
\begin{align}\label{eqn:OpPertProb}
\begin{split}
A&:= \diag\left(c_j\cdot\frac{\dd}{\dd x}+M_{q_j}\right),\\
\dom(A)&:=\left\{f\in\W^{1,\infty}\left(\left[0,1\right],\ell^1\right)\mid  f(1)=\BB^Cf(0) \right\},
\end{split}
\end{align}
where $M_{q_j}$ denotes the multiplication operator with the function $q_j$. We can split the operator $A$ into a sum of operators 
\[
A=\diag\left(c_j\cdot\frac{\dd}{\dd x}\right)+\diag\left(M_{q_j}\right)=:A_C+M_q.
\]
We already know that {the} operator $(A_C,\dom(A))$ generates a bi-continuous semigroup on $X$. In order to show that $(A,\dom(A))$ is a generator as well we shall apply the perturbation theory for bi-continuous semigroups.
\begin{theorem}\label{thm:BiContNet-pert}
Let the assumptions \eqref{as:c-lim} and \eqref{pj} hold and let $q\in\L^{\infty}\left(\left[0,1\right],\ell^{\infty}\right)$.
Then the operator $(A,\dom(A))$, defined by \eqref{eqn:OpPertProb}, generates a bi-continuous semigroup on  $X$ with respect to the weak$^*$-topology.
\end{theorem}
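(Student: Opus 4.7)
The plan is to apply a bounded perturbation theorem for bi-continuous semigroups to the splitting $A=A_C+M_q$ already given in the statement. By Theorem~\ref{thm:BiContNet-Q} the operator $(A_C,\dom(A_C))$ generates a contraction bi-continuous semigroup on $X$ with respect to the weak$^*$-topology, so it suffices to show that $M_q$ is a bounded operator on $X$ that is compatible with the weak$^*$-topology in the sense required by the perturbation theory.

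Boundedness follows from a pointwise estimate: for $f\in X$ and a.e.\ $s\in[0,1]$ one has
\[
\|(M_qf)(s)\|_{\ell^1}=\sum_{j\in J}|q_j(s)f_j(s)|\leq\|q(s)\|_{\ell^\infty}\cdot\|f(s)\|_{\ell^1},
\]
so that $\|M_q\|_{\LLL(X)}\leq\|q\|_{\L^\infty([0,1],\ell^\infty)}$.

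The delicate step is to check that $M_q$ respects the weak$^*$-topology, which I would do by exhibiting a bounded preadjoint on $Y:=\L^1([0,1],\c_0)$. The key observation is that for $q(s)\in\ell^\infty$ and $g(s)\in\c_0$ the componentwise product again belongs to $\c_0$, because $|q_j(s)g_j(s)|\leq\|q(s)\|_{\ell^\infty}|g_j(s)|\to 0$ as $j\to\infty$. This produces a bounded operator $\widetilde M_q$ on $Y$ (of norm at most $\|q\|_{\L^\infty([0,1],\ell^\infty)}$), and the pairing \eqref{eqn:weak*top} gives, for $f\in X$, $g\in Y$,
\[
\langle M_qf,g\rangle=\int_0^1\sum_{j\in J}q_j(s)f_j(s)g_j(s)\,\dd s=\langle f,\widetilde M_qg\rangle,
\]
so $M_q=(\widetilde M_q)'$. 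Consequently, any $\|\cdot\|_X$-bounded weak$^*$-null sequence $(f_n)_{n\in\NN}$ is mapped by $M_q$ to a weak$^*$-null sequence, trivially uniformly in time since $M_q$ is independent of $t$. This is exactly the locally bi-equicontinuous compatibility of $M_q$ with the weak$^*$-topology that a bounded perturbation result for bi-continuous semigroups (see, e.g., \cite{FaPHD,FaStud,BF}) requires.

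Invoking that theorem yields that $(A_C+M_q,\dom(A_C))$ generates a bi-continuous semigroup on $X$; since $M_q$ is bounded, the domain is unchanged and coincides with $\dom(A)$ as given in \eqref{eqn:OpPertProb}. The point requiring most care is the strong measurability of $s\mapsto q(s)g(s)$ as a $\c_0$-valued function: but $\c_0$ being separable, Pettis' theorem reduces this to weak measurability, which is inherited from the componentwise measurability of $q$ and $g$ together with the uniform bound $\|q\|_{\L^\infty([0,1],\ell^\infty)}<\infty$.
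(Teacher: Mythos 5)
Your proposal is correct and follows essentially the same route as the paper: both exhibit $M_q$ as the adjoint of a bounded multiplication operator on the predual $\L^1\left(\left[0,1\right],\c_0\right)$ (using that $\ell^\infty\cdot\c_0\subseteq\c_0$) and then invoke Farkas's bounded perturbation theorem for bi-continuous semigroups \cite[Prop.~4.3]{FaStud}. You merely supply more detail (the norm estimate, the $\c_0$-invariance, and the measurability check) than the paper's brief argument.
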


\begin{proof}
By Theorem \ref{thm:BiContNet-Q}, $(A_C,\dom(A))$ is the generator of a bi-continuous semigroup.  Since  $q\in\L^{\infty}\left(\left[0,1\right],\ell^{\infty}\right)$, {the} multiplication operator $M_q$ is a bounded operator on $\L^1\left(\left[0,1\right],c_0\right)$ and its dual $M_q'= M_q$ is again a multiplication operator. Therefore, we can apply the special case of the bounded perturbation theorem for bi-continuous semigroups due to Farkas, cf. \cite[Prop.~4.3]{FaStud} yielding that $(A,\dom(A))=(A_C+M_q,\dom(A))$ generates a bi-continuous semigroup on $X=\L^1\left(\left[0,1\right],c_0\right)'$.
\end{proof}

\begin{remark} Note, that the bi-continuous semigroup obtained in Theorem \ref{thm:BiContNet-pert} is given by the Dyson-Phillips series, see \cite[Thm.~3.5]{FaStud}. The semigroup can also be represented by means of the Trotter product formula for bi-continuous semigroups, cf. \cite[Cor.~4.2]{AM2004} or \cite[Cor.~2.11]{KuPHD}. Let { $(S(t))_{t\geq0}$ denote the semigroup generated by $(A,\dom(A))$. Then} \[
S(t)x=\tlim_{n\to\infty}{\left[{T_C}\left(\tfrac{t}{n}\right)\ee^{\frac{tM_q}{n}}\right]^nx},
\]
for all $x\in X$ and uniformly on compact intervals, {where we denote by $(T_C(t))_{t\geq0}$} the bi-continuous semigroup generated by {$(A_C,\dom(A))$ given in \eqref{isomorph}}..
\end{remark}

\begin{corollary}
If  the assumptions \eqref{as:c-lim} and \eqref{pj}  on the velocities $c_j$  hold  {and $q\in\L^{\infty}\left(\left[0,1\right],\ell^{\infty}\right)$,} {then} the flow problem \eqref{eqn:F}  is well-posed on $X=\L^{\infty}\left(\left[0,1\right],\ell^1\right)$.
\end{corollary}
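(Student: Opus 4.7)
The proof is essentially a direct combination of the generation result just established with the abstract well-posedness theorem recalled in Section~2. My plan is as follows.

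First, I would verify that the flow problem \eqref{eqn:F} is equivalent to the abstract Cauchy problem \eqref{eqn:ACP} with operator $(A,\dom(A))$ defined by \eqref{eqn:OpPertProb}. This is the same kind of identification already made in Subsection~\ref{subs:simple} for the case $c_j=1$, $q_j=0$: the diagonal velocity factor $c_j$ turns the pointwise $\frac{\partial}{\partial x}$ into the $j$-th component of the diagonal operator $\diag(c_j \frac{\dd}{\dd x})$, the absorption term $q_j(x)u_j(x,t)$ corresponds to the bounded multiplication operator $M_{q_j}$, and the transmission condition $u_j(1,t)=\sum_{k\in J}\mathbb{B}^C_{jk}u_k(0,t)$ is exactly the boundary condition $f(1)=\mathbb{B}^C f(0)$ encoded in $\dom(A)$. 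The initial datum $(f_j)_{j\in J}$ corresponds to an element of $\dom(A)\subseteq \W^{1,\infty}([0,1],\ell^1)$.

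Next, I would invoke Theorem~\ref{thm:BiContNet-pert}, which, under the assumptions \eqref{as:c-lim}, \eqref{pj} and $q\in\L^{\infty}([0,1],\ell^{\infty})$, guarantees that the operator $(A,\dom(A))$ from \eqref{eqn:OpPertProb} generates a bi-continuous semigroup on $X=\L^{\infty}([0,1],\ell^1)$ with respect to the weak$^*$-topology. Finally, I would apply Theorem~\ref{thm:wp}, which asserts that whenever an operator generates a bi-continuous semigroup, the associated abstract Cauchy problem is well-posed in the sense introduced just before it (existence, uniqueness, and continuous dependence on the initial data in norm-bounded, $\tau$-convergent form). Combining these two facts with the identification from the first step yields well-posedness of \eqref{eqn:F}.

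There is no substantial obstacle here since all the work has already been done in the generation theorem: the hard steps were proving the bi-continuity of the unperturbed semigroup (Lemmas~\ref{prop:StrCont}--\ref{prop:LocBiEquiCont}), the identification of its generator (Theorem~\ref{thm:BiContNet}), the reduction from rationally dependent velocities to unit velocities via Theorem~\ref{thm:BiContNet-Q}, and the bounded perturbation argument with $M_q$. The only thing to double-check is that the equivalence between \eqref{eqn:F} and \eqref{eqn:ACP} is preserved when passing to the weak$^*$-sense of differentiation in time, but this follows from the fact that classical solutions in $\dom(A)$ automatically satisfy both formulations simultaneously.
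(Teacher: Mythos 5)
Your proposal is correct and follows exactly the route the paper intends: the corollary is an immediate consequence of Theorem~\ref{thm:BiContNet-pert} (generation of a bi-continuous semigroup by the operator in \eqref{eqn:OpPertProb}) combined with Theorem~\ref{thm:wp}, after identifying the flow problem \eqref{eqn:F} with the corresponding abstract Cauchy problem. The paper gives no separate proof for this corollary, so your two-step argument is precisely the intended one.
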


\subsection{The general case for finite networks}\label{sec:GenCaseFinNet}

\medskip
We finally consider the case of general $c_j\in\RR$ but restrict ourselves to 
 \emph{finite} graphs, i.e., we work on the Banach space $X=\L^{\infty}\left(\left[0,1\right],\CC^m\right)$, where $m$ denotes the number of edges in the graph.
In  \cite[Cor.~18.15]{Positive2017} the Lumer--Phillips generation theorem for positive strongly continuous semigroups is applied to show
that the transport problem with general $c_j\in\RR$ is well-posed on $X=\L^{1}\left(\left[0,1\right],\CC^m\right)$. Since an appropriate variant of this result for the bi-continuous situation is not known,  we proceed differently and use the variant  of Trotter--Kato approximation theorem given in Theorem \ref{thm:TK}.

Let
\[
E_{\lambda}(s):=\diag\left(\ee^{\left({\lambda}/{c_j}\right) s}\right),\quad s\in\left[0,1\right],
\quad
\text{and}\quad \mathbb{B}^{C}_{\lambda}:=E_{\lambda}(-1)\mathbb{B}^{C}.
\]
By using this notation one can write an explicit expression for the resolvent of {the} operator $A_C$ defined in \eqref{eqn:Gen}.  
\begin{lemma}{\cite[Prop.~18.12]{Positive2017}}\label{prop:ExRes}
For $\mathrm{Re}(\lambda)>0$ the resolvent $R(\lambda,A_C)$ of {the} operator $A_C$,  given in \eqref{eqn:Gen}, equals
\begin{align*}
R(\lambda,A_C)=\left(\Id_X+E_{\lambda}(\cdot)\left(1-\BB^{C}_{\lambda}\right)^{-1}\BB^{C}_{\lambda}\otimes\delta_0\right)R_{\lambda},
\end{align*}
where $\delta_0$ denotes the point evaluation at $0$ and
\begin{align*}
\left(R_{\lambda}f\right)(s)=\int_{s}^1{E_{\lambda}(s-t)C^{-1}f(t)\ \dd{t}},\quad s\in\left[0,1\right],\ f\in\L^{\infty}\left(\left[0,1\right],\CC^m\right).
\end{align*}
\end{lemma}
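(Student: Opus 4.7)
The plan is to verify the formula directly by solving the resolvent equation $(\lambda - A_C)u = f$ for a given $f \in X$ and checking that the unique solution $u \in \dom(A_C)$ coincides with the right-hand side applied to $f$. Working coordinate-wise, the scalar equation $\lambda u_j - c_j u_j' = f_j$ is a first-order linear ODE whose general solution, written using the right endpoint $u_j(1)$ as initial datum, reads
\[
u_j(s) = \ee^{(\lambda/c_j)(s-1)} u_j(1) + \int_s^1 \ee^{(\lambda/c_j)(s-t)} \tfrac{1}{c_j} f_j(t) \dd{t}.
\]
Assembling these into a vector identity with $E_\lambda$ and $C$ yields
\[
u(s) = E_\lambda(s-1)\, u(1) + (R_\lambda f)(s), \qquad s \in [0,1].
\]

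Next I would incorporate the boundary condition $u(1) = \BB^C u(0)$. Evaluating the above at $s=0$ and substituting the boundary condition gives
\[
u(0) = E_\lambda(-1)\BB^C u(0) + (R_\lambda f)(0) = \BB^C_\lambda u(0) + (R_\lambda f)(0),
\]
whence, provided $I - \BB^C_\lambda$ is invertible, $u(0) = (I - \BB^C_\lambda)^{-1} (R_\lambda f)(0)$. I would argue the invertibility by observing that on the finite-dimensional space $\CC^m$ the diagonal matrix $E_\lambda(-1)$ satisfies $|\ee^{-\lambda/c_j}| = \ee^{-\mathrm{Re}(\lambda)/c_j} < 1$ for $\mathrm{Re}(\lambda) > 0$, while $\BB^C = C^{-1}\BB C$ is similar to the stochastic matrix $\BB$ with spectral radius one; this makes the spectral radius of the product $\BB^C_\lambda = E_\lambda(-1)\BB^C$ strictly less than one in an appropriate equivalent norm, so that the Neumann series for $(I - \BB^C_\lambda)^{-1}$ converges.

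Substituting back into the formula for $u(s)$ and using $u(1) = \BB^C u(0)$ gives
\[
u(s) = E_\lambda(s-1)\BB^C (I - \BB^C_\lambda)^{-1} (R_\lambda f)(0) + (R_\lambda f)(s)
= E_\lambda(s) \BB^C_\lambda (I-\BB^C_\lambda)^{-1} (R_\lambda f)(0) + (R_\lambda f)(s).
\]
Since $(I - \BB^C_\lambda)^{-1}$ is a power series in $\BB^C_\lambda$, the two factors commute, so $\BB^C_\lambda(I - \BB^C_\lambda)^{-1} = (I - \BB^C_\lambda)^{-1}\BB^C_\lambda$, yielding exactly the claimed identity
\[
R(\lambda, A_C) f = \bigl(\Id_X + E_\lambda(\cdot)(I - \BB^C_\lambda)^{-1}\BB^C_\lambda \otimes \delta_0\bigr) R_\lambda f.
\]

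Finally, I would verify that this $u$ actually lies in $\dom(A_C)$: regularity $u \in \W^{1,\infty}([0,1],\CC^m)$ follows because $R_\lambda f$ is an indefinite integral of an $\L^\infty$-function and the additional term is an $\L^\infty$-valued multiplication of a smooth function with a constant vector; and the boundary identity $u(1) = \BB^C u(0)$ is reversed-engineered from Step 2, hence holds by construction. The main obstacle I expect is justifying the invertibility of $I - \BB^C_\lambda$ cleanly; in the finite-dimensional setting this is a standard spectral-radius argument, but one must exploit the similarity $\BB^C = C^{-1}\BB C$ together with $\|\BB\| = 1$ rather than expect $\|\BB^C_\lambda\| < 1$ in the ambient operator norm on $\CC^m$.
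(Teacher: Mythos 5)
Your proposal is correct and complete. The paper itself offers no proof of this lemma --- it is imported by citation from \cite[Prop.~18.12]{Positive2017} --- and your direct verification (solve the first-order ODE coordinate-wise from the endpoint $s=1$, impose $u(1)=\BB^C u(0)$ to get $(I-\BB^C_\lambda)u(0)=(R_\lambda f)(0)$, invert, and substitute back) is exactly the standard argument behind the cited result. You also correctly identified and resolved the one genuinely delicate point: since $E_\lambda(-1)$ and $C^{-1}$ are both diagonal, $\BB^C_\lambda=C^{-1}\bigl(E_\lambda(-1)\BB\bigr)C$ is similar to $E_\lambda(-1)\BB$, whose $\ell^1$-norm is at most $\ee^{-\mathrm{Re}(\lambda)/c_{\max}}<1$ by \eqref{as:c-lim} and $\|\BB\|=1$, so $r(\BB^C_\lambda)<1$ and $I-\BB^C_\lambda$ is invertible even though $\|\BB^C_\lambda\|$ need not be $<1$ in the ambient norm; note that your derivation also yields injectivity of $\lambda-A_C$, since any solution of the homogeneous equation satisfies $u(0)=\BB^C_\lambda u(0)$ and hence vanishes.
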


\begin{theorem}\label{thm:BiContNet-gen-fin}
The operator $(A_C,\dom(A_C))$, defined in \eqref{eqn:Gen}, generates a bi-con\-ti\-nu\-ous semigroup $(T_C(t))_{t\geq0}$ on $X=\L^{\infty}\left(\left[0,1\right],\CC^m\right)$.
\end{theorem}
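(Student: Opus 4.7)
The plan is to apply the Trotter--Kato approximation theorem (Theorem \ref{thm:TK}) to a sequence of approximating problems with rationally dependent velocities. Since the graph is finite, I first choose, for each $n\in\NN$, positive rationals $c_j^{(n)}$, $j\in J$, with $c_j^{(n)}\to c_j$ such that the tuple $(c_j^{(n)})_{j\in J}$ admits a common multiplier in $\NN$, i.e.~satisfies \eqref{pj}; this is possible because we only have $m$ velocities to approximate, so a common denominator can always be found. Setting $C_n:=\diag(c_j^{(n)})$, Theorem \ref{thm:BiContNet-Q} yields bi-continuous contraction semigroups $(T_{C_n}(t))_{t\geq 0}$ on $X$, generated by $(A_{C_n},\dom(A_{C_n}))$ with respect to the weak-$*$ topology.

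Next I would verify that the family $\{(T_{C_n}(t))_{t\geq 0}\}_{n\in\NN}$ is uniformly bi-continuous of type $0$ in the sense of Definition \ref{def:UniBiCont}. The uniform exponential bound with $M=1$, $\omega=0$ is clear since every $T_{C_n}$ is a contraction. For uniform local bi-equicontinuity one adapts the argument from the proof of Lemma \ref{prop:LocBiEquiCont}: $T_{C_n}(t)f$ is written as a finite combination of translations at the varying speeds $c_j^{(n)}$ composed with powers of $\BB^{C_n}$, and one exploits the uniform bound $\|\BB^{C_n}\|\leq 1$ together with the automatic continuity of the action of finite matrices on the predual $\L^1([0,1],\CC^m)$ to reduce matters to the local bi-equicontinuity of the translation semigroup uniformly in $n$.

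The key computation is the verification of hypothesis (b) of Theorem \ref{thm:TK} via the explicit resolvent formula in Lemma \ref{prop:ExRes}. For $\lambda_0>0$ sufficiently large (so that $\|\BB^{C_n}_{\lambda_0}\|<1$ uniformly in $n$), as $C_n\to C$ in $\CC^{m\times m}$ the factor $E_{\lambda_0}^{(n)}(s)$ converges uniformly in $s\in[0,1]$, the matrix $\BB^{C_n}_{\lambda_0}$ converges, the inverse $(1-\BB^{C_n}_{\lambda_0})^{-1}$ converges by a Neumann series argument, and the integral operator $R_{\lambda_0}^{(n)}$ converges in operator norm by dominated convergence. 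Hence $R(\lambda_0,A_{C_n})\to R(\lambda_0,A_C)$ in operator norm, where $R(\lambda_0,A_C)$ is defined by the same formula. Taking $R:=R(\lambda_0,A_C)\in\LLL(X)$, whose range is $\dom(A_C)$ and is bi-dense in $X$ (by mollification and adjustment of boundary values), hypothesis (b) is satisfied.

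Theorem \ref{thm:TK} then yields a bi-continuous semigroup $(T_C(t))_{t\geq 0}$ on $X$ with generator $(B,\dom(B))$ such that $T_{C_n}(t)f\to T_C(t)f$ in weak-$*$ uniformly on compact intervals. By Remark \ref{rem:TK}, $R(\lambda_0,B)=R=R(\lambda_0,A_C)$, whence $\dom(B)=\mathrm{Ran}(R)=\dom(A_C)$ and $B=A_C$, since a direct computation from Lemma \ref{prop:ExRes} gives $(\lambda_0-A_C)R(\lambda_0,A_C)=\Id$ on $X$. The main obstacle I anticipate is the uniform local bi-equicontinuity of the approximating semigroups: one must keep track of how the explicit shift-and-jump representation of $T_{C_n}(t)$ depends on $n$ and verify that the estimates do not deteriorate as $n\to\infty$. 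The finite-dimensionality of the fibre $\CC^m$ in this subsection makes this step substantially easier than its infinite-dimensional counterpart.
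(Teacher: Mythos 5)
Your proposal follows essentially the same route as the paper: approximate the velocities by rationals satisfying \eqref{pj}, invoke Theorem \ref{thm:BiContNet-Q} for the approximating contraction semigroups, verify hypothesis (b) of Theorem \ref{thm:TK} via the explicit resolvent formula of Lemma \ref{prop:ExRes}, and identify the limit generator with $A_C$ through Remark \ref{rem:TK}. The only notable (and harmless) divergences are that the paper obtains uniform bi-continuity of type $0$ from the similarity of the approximating semigroups rather than by re-running the bi-equicontinuity estimate, and establishes bi-density of $\dom(A_C)$ by an explicit linear truncation at the endpoints rather than by mollification.
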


\begin{proof}
We first show that {the} operator $A_C$ is bi-densely defined. Take any $f\in\L^{\infty}\left(\left[0,1\right],\CC^m\right)$. For $n\in\NN$ let $\Omega_n:=\left[\frac{1}{n},1-\frac{1}{n}\right]\subseteq\left[0,1\right]$ and define $f_n:\left[0,1\right]\to\CC^m$ by a linear truncation of $f$ outside $\Omega_n$, i.e.,
\[
f_n(x):=\begin{cases}
nf\left(\frac{1}{n}\right)x,&\ x\in\left[0,\frac{1}{n}\right],\\
f(x),&\ x\in\left[\frac{1}{n},1-\frac{1}{n}\right],\\
nf\left(1-\frac{1}{n}\right)(1-x),&\ x\in\left[1-\frac{1}{n},1\right].
\end{cases}
\]
Observe that  $f_n$ is Lipschitz for each $n\in\NN$ and hence $f_n\in\W^{1,\infty}\left(\left[0,1\right],\CC^m\right)$. 
Moreover $f_n(1)=f_n(0)=0$ for each $n\in\NN$ implying that ${f_n(1)}=\BB^Cf_n(0)$, hence $f_n\in\dom(A_C)$.
Furthermore, one has that $\sup_{n\in\NN}{\left\|f_n\right\|}\le \|f\|<\infty$ and  $f_n{\rightarrow}f$ as $n\to\infty$ with respect to the weak$^*$-topology since
\begin{align*}
{\left|\int_0^1 {\left\langle \left(f_n(x)-f(x)\right),g(x)\right\rangle\ \dd{x}}\right|} &\leq2\left\|f\right\|_{\infty}\lambda^1\left(\left[0,\frac{1}{n}\right]\cup\left[1-\frac{1}{n},1\right]\right)\left\|g\right\|_{1} \\
&=\frac{4}{n}\left\|f\right\|_{\infty}\left\|g\right\|_1
\end{align*}
for each $g\in\L^1\left(\left[0,1\right],\CC^m\right)$. 

We now define a sequence of operators $A_n$ approximating $A_C$ in the following way. 
 For each $c_j\in\RR$ there exists a sequence $\left(c^{(n)}_j\right)_{n\in\NN}$ in $\QQ$ such that $\lim_{n\rightarrow\infty}{c_j^{(n)}}=c_j$. 
Since the network is finite, for each $n\in\NN$ the velocities $c^{(n)}_j$, $j\in J$, satisfy condition \eqref{pj} and, by Proposition \ref{thm:BiContNet-Q} we obtain {a} contraction bi-continuous semigroup $(T_n(t))_{t\geq0}$ generated by
\begin{equation}
\begin{aligned}\label{An}
A_n &:= \diag\left(c_j^{(n)}\cdot\frac{\dd}{\dd x}\right),\\
\dom(A_n) &:= \left\{f\in\W^{1,\infty}\left(\left[0,1\right],\CC^m\right)\mid  f(1)=\BB^{C_n}f(0) \right\},
\end{aligned}
\end{equation}
where $C_n:=\diag\left(c_j^{(n)}\right)$. Moreover, all semigroups  $(T_n(t))_{t\geq0}$, $n\in \NN$,  are similar and thus uniformly bi-continuous of type 0.

Observe, that the general assumptions of Theorem \ref{thm:TK} are satisfied. Let us now check the assumptions of assertion (b).  
 Let $R:=R(\lambda,A_C)$ and observe that $R:\L^{\infty}\left(\left[0,1\right],\CC^m\right)\to\dom(A_C)$ is a bijection. By above, $\mathrm{Ran}(R)$ is bi-dense  in $\L^{\infty}\left(\left[0,1\right],\CC^m\right)$.
 For every $n\in\NN$, replacing   $c_j$ by $c_j^{(n)}$ for all $j\in J$,  Lemma \eqref{prop:ExRes} yields an explicit expression for $R(\lambda,A_n)$. It is easy to see that  $R(\lambda,A_n)f\stackrel{\left\|\cdot\right\|}{\rightarrow} Rf$ for $f \in D(A_C)$ as $n\to\infty$. Applying Theorem \ref{thm:TK} gives us a bi-continuous semigroup $(T_C(t))_{t\geq0}$ with generator $(B,\dom(B))$. Note that, since in our case $R=R(\lambda,A_C)$ is a resolvent, by  Remark \ref{rem:TK} we have $R=R(\lambda,A_C)=R(\lambda,B)$ for $\lambda\in\rho(A_C)$ and by the uniqueness of the Laplace transform we conclude that $(B,\dom(B))=(A_C,\dom(A_C))$. 
\end{proof}

\begin{corollary}\label{cor:WellPGen}
The flow problem \eqref{eqn:F} is well-posed on $X=\L^{\infty}\left(\left[0,1\right],\CC^m\right)$.
\end{corollary}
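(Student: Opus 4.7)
The plan is to reduce the corollary to an application of the well-posedness theorem for bi-continuous semigroups (Theorem \ref{thm:wp}) by producing a bi-continuous semigroup generator on $X=\L^{\infty}\left(\left[0,1\right],\CC^m\right)$ whose associated abstract Cauchy problem is equivalent to \eqref{eqn:F}. Since we are now in the finite-network situation with general velocities $c_j\in\RR$ and absorption coefficients $q_j$, the natural candidate is
\[
A:=\diag\left(c_j\cdot\frac{\dd}{\dd x}+M_{q_j}\right),\qquad \dom(A)=\left\{f\in\W^{1,\infty}\left(\left[0,1\right],\CC^m\right)\mid f(1)=\BB^C f(0)\right\},
\]
so that \eqref{eqn:F} written as a vector-valued equation reads $\dot u(t)=Au(t)$, $u(0)=f$, with $f=(f_j)_{j=1}^m\in\dom(A)$.

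First, I would split $A=A_C+M_q$, where $A_C$ is the transport part defined in \eqref{eqn:Gen} and $M_q=\diag(M_{q_j})$ is the multiplication operator by the matrix-valued function $q$. By Theorem \ref{thm:BiContNet-gen-fin}, operator $(A_C,\dom(A_C))$ generates a bi-continuous semigroup on $X=\L^{\infty}\left(\left[0,1\right],\CC^m\right)$ with respect to the weak$^*$-topology. So it remains to incorporate the bounded perturbation $M_q$.

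Next, since $q\in\L^{\infty}\left(\left[0,1\right],\CC^m\right)$ (taking componentwise essential suprema), the operator $M_q$ is a bounded operator on $X$; crucially, it is the dual of the bounded multiplication operator by the same $q$ on $\L^1\left(\left[0,1\right],\CC^m\right)$. This latter fact guarantees $M_q$ is compatible with the weak$^*$-topology in the sense required by the bounded perturbation theorem for bi-continuous semigroups of Farkas \cite[Prop.~4.3]{FaStud}. This step is exactly the same as in the proof of Theorem \ref{thm:BiContNet-pert}, only now the underlying unperturbed generator is the finite-network $A_C$ from Theorem \ref{thm:BiContNet-gen-fin} rather than the one from Theorem \ref{thm:BiContNet-Q}. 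Consequently, $(A,\dom(A))=(A_C+M_q,\dom(A_C))$ generates a bi-continuous semigroup on $X$.

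Finally, I would invoke Theorem \ref{thm:wp}, which asserts that generation of a bi-continuous semigroup implies well-posedness of the associated \eqref{eqn:ACP}. Since that abstract Cauchy problem is precisely a reformulation of \eqref{eqn:F}, the corollary follows. I do not anticipate a serious obstacle here, because all the machinery (generation in the finite-network case, and bounded perturbation) has already been developed; the only point requiring mild care is the identification of $M_q$ as a weak$^*$-continuous bounded perturbation, which follows by viewing it as the adjoint of the multiplication operator on the pre-dual $\L^1\left(\left[0,1\right],\CC^m\right)$.
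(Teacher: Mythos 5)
Your proposal is correct and follows exactly the route the paper intends: Corollary \ref{cor:WellPGen} is stated without proof as an immediate consequence of the generation result (Theorem \ref{thm:BiContNet-gen-fin}) combined with Theorem \ref{thm:wp}, with the absorption term handled by the bounded perturbation argument already carried out in Theorem \ref{thm:BiContNet-pert}. If anything, you are slightly more explicit than the paper, which leaves the $M_q$-perturbation step for the finite-network case implicit even though Theorem \ref{thm:BiContNet-gen-fin} only covers the unperturbed operator $A_C$.
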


\begin{remark}
In the same manner, by using the original strongly continuous version of the Trotter--Kato Theorem (see \cite[Sect.~III.4b]{EN}), one can deduce the well-posedness of the problem on $X=\L^1 \left(\left[0,1\right],\CC^m\right)$. 
\end{remark}

{
\begin{remark}
Note, that by our assumptions $\|\mathbb{B}^{C}_{\lambda}\|<1$ (see also \cite[(18.20)]{Positive2017}), hence von-Neuman expansion yields positivity of the resolvent $R(\lambda,A_C)$ given in Lemma \ref{prop:ExRes} (which can be proven also in the case of an infinite graph). By \cite[Them.~1.4.1]{FaPHD}, 
$(A_C,\dom(A))$ thus generates a positive bi-continuous semigroup.
Further, some spectral properties can be deduced from the resolvent formula as in \cite{Dorn2008}.
\end{remark}

\subsection*{Conclusion}
We have shown that the abstract theory of bi-continuous operator semigroups can be successfully applied to study transport processes in infinite networks. This opens the door for various possible further considerations. In fact, our setting is already used in  \cite{Dob2020} where the long-term behaviour of the solutions in the spirit of \cite{DKS2009} is studied. 
{By using a concept of the semigroup at infinity and recently developed asymptotical theory,} it is shown that, under suitable assumptions on the graph, the bi-continuous semigroups obtained  in  Theorem \ref{thm:BiContNet} {and} and  Theorem  \ref{thm:BiContNet-Q} {both} behave asymptotically periodic with respect to the operator norm. {In the case of general velocities, the question of well-posedness  of problem \eqref{eqn:F} as well as the description of the long-time behaviour of its solutions in $\L^{\infty}\left(\left[0,1\right],\ell^1\right)$  remain open.}
}



\end{document}